\documentclass[12pt,a4paper,reqno]{amsart}
\usepackage{amsfonts}
\usepackage{amsthm}
\usepackage{amsmath}
\usepackage{amscd}
\usepackage[latin2]{inputenc}
\usepackage{t1enc}
\usepackage[mathscr]{eucal}
\usepackage{indentfirst}
\usepackage{graphicx}
\usepackage{graphics}
\usepackage{pict2e}
\usepackage{epic}
\numberwithin{equation}{section}
\usepackage[margin=2.8cm]{geometry}
\usepackage{hyperref}
\hypersetup{
    colorlinks=true,
    linkcolor=blue,
    filecolor=blue,      
    urlcolor=cyan,
}

\usepackage{epstopdf} 
\usepackage{tabularx}
\usepackage{longtable}
\usepackage{multirow}
\usepackage{hhline}
\usepackage{fancyhdr}
\usepackage{stmaryrd}
\usepackage[all]{xy}
\usepackage{xcolor}
\usepackage{comment}
\usepackage{shuffle}
\usepackage{nameref, cleveref}
\usepackage{mathtools}
\usepackage{enumerate}
\usepackage{tikz-cd}

\theoremstyle{plain}
\newtheorem{Th}{Theorem}[section]
\newtheorem{Lemma}[Th]{Lemma}
\newtheorem{Cor}[Th]{Corollary}
\newtheorem{Prop}[Th]{Proposition}

\theoremstyle{definition}
\newtheorem{Def}[Th]{Definition}
\newtheorem{Conj}[Th]{Conjecture}

\newtheorem{?}[Th]{Problem}
\newtheorem{Ex}[Th]{Example}

\begin{document}

\title[Zero Distribution of $v-$adic MZVs]{Zero Distribution of $v-$adic Multiple Zeta Values over $\mathbb{F}_q(t)$}

\author[Qibin Shen]{Qibin Shen}

\address{Department of Mathematics\\University of Rochester \\ 500 Joseph C. Wilson Blvd., Rochester, NY, 14627} 

\email{qshen4@ur.rochester.edu}

\date{September 20, 2019}

\subjclass[2010]{Primary: 11M32. Secondary: 11R58.}

\keywords{$v-$adic Multiple Zeta Values}

\begin{abstract} This paper aims to study the zero distribution of $v-$adic multiple zeta values over function fields. We show that the interpolated $v-$adic MZVs at negative integers only vanish at what we call the ``trivial zeros'', for degree one prime over rational function fields. And we conjecture that this result can be generalized to all primes.
\end{abstract}

\maketitle

\section{Introduction} 

Multiple Zeta Values(MZVs) was originally introduced and studied by Euler, and recently, these values showed up
again in various subjects in mathematics and
mathematical physics. In Furusho's paper \cite{F04}, by making an analytic continuation of $p-$adic multiple polylogarithms introduced by Coleman's $p-$adic iterated integration theory \cite{Col} and he was able to define $p-$adic multiple
zeta values to be a limit value at $1$ of analytically continued $p-$adic multiple
polylogarithms.

It's well known that researches on function fields and number fields often go in parallel and thus mutually beneficial. Often the former one is inspired by the latter.

Unlike the classical case, the interpolated $v-$adic MZVs in function field case are well-defined at all integer points. In this paper, we mainly studied $v-$adic multiple zeta values over $\mathbb{F}_q(t)$.

We first adapt some notations before we give the definition.

\textbf{Notations:}
\begin{eqnarray*}
\mathbb{Z}_- &=& \{ \textit{negative integers} \},\\
q &=& p^f, \textit{a power of a prime p},\\
\mathbb{F}_q &=& \textit{a finite field of $q$ elements},\\
K &=& \textit{function field over $\mathbb{F}_q$},\\
\infty &=& \textit{a rational place in $K$},\\
A &=& \textit{the ring of integral elements outside $\infty$},\\
v &=& \textit{a monic prime in $A$},\\
A_{d+} &=& \textit{monics in $A$ of degree $d$},\\
K_\infty &=& \textit{completion of $K$ at $\infty$},\\
K_v &=& \textit{completion of $K$ at prime $v$}.
\end{eqnarray*}

\begin{Def}
An integer $s$ is called \textit{$q-$even} if $(q-1) | s$. Otherwise, it's called \textit{$q-$odd}.
\end{Def}
The reason to introduce ``$q-$even'' is that the behavior of Carlitz zeta values ($\zeta(s)$ of depth 1 defined in \Cref{mzv}) at $q-$even integers is similar as that of Riemann zeta values at even integers.

Now we are ready to define MZVs over $K$. Given $k \in \mathbb{Z}$ and $\mathbf{s}$ with $s_j \in \mathbb{Z}$, for $d \geq 0$, let
$$S_d(k):=\sum_{a \in A_{d+}} \frac{1}{a^k} \in K,$$
$$S_d(\mathbf{s}):=S_d(s_1)\sum_{d>d_2>\cdots>d_r \geq 0} S_{d_2}(s_2) \cdots S_{d_r}(s_r) \in K.$$
\begin{Def}\label{mzv}
Let $\mathbf{s}=(s_1, \ldots, s_r), s_i \in \mathbb{Z}$, define \textit{multiple zeta value} 
$$\zeta(\mathbf{s}):=\sum_{d_1 > \cdots > d_r \geq 0} S_{d_1}(s_1) \cdots S_{d_r}(s_r) \in K_\infty.$$
We call $r$ the depth of $\zeta(\mathbf{s})$, and if each $s_i>0$, $wt:=\sum_{i=1}^rs_i$ is the weight of $\zeta(\mathbf{s})$.
\end{Def}
We first introduce the interpolated $v-$adic multiple zeta values defined by Thakur in \cite{T04} here.
Given $k \in \mathbb{Z}$, $v$ a prime in $A_+$ and $\mathbf{s}$ with $s_i \in \mathbb{Z}$, for $d \geq 0$, let
$$S_d(k):=\sum_{a \in A_{d+}} \frac{1}{a^k} \in K,$$
$$\widetilde{S}_d(k):=\sum_{\substack{a \in A_{d+}\\ (a,v)=1}} \frac{1}{a^k} \in K,$$
$$\widetilde{S}_d(\mathbf{s}):=\widetilde{S}_d(s_1)\sum_{d>d_2>\cdots>d_r \geq 0} \widetilde{S}_{d_2}(s_2) \cdots \widetilde{S}_{d_r}(s_r) \in K.$$
\begin{Def}
Let $\mathbf{s}=(s_1, \ldots, s_r), s_i \in \mathbb{Z}$, $v$ a prime in $A_+$, define \textit{$v-$adic multiple zeta value} 
$$\zeta_v(\mathbf{s}):=\sum_{d_1 > \cdots > d_r \geq 0} \widetilde{S}_{d_1}(s_1) \cdots \widetilde{S}_{d_r}(s_r) \in K_v.$$
\end{Def}
It's well known that this power series is convergent for any $\mathbf{s}$ with each $s_i\in \mathbb{Z}$. These values can be extended continuously to $v-$adic domains, but we will be only interested in special values at $\mathbf{s}$ with integer coordinates.

\begin{Def}
Recall $q=p^f$.
\begin{itemize}
\item[(1)] For $k \in \mathbb{N}$, let $l(k)$ be the sum of digits of $k$ base $q$. Define
$$L_k:= \min_{i=0,\ldots,f-1} \{ l(kp^i)/(q-1) \}.$$
Note that $k \equiv l(k)\  (\mathrm{mod}\ q-1)$, thus $L_k$ is an integer if and only if $(q-1)|k$, i.e. $k$ is $q-$even.
\item[(2)] For $d \geq 0$ and $k > 0$, define
$$V_d(k):=\{ (m_0, \ldots, m_d) \in U_d(k)\; \big|\;m_0 >0 \}.$$
\end{itemize}
\end{Def}

\begin{Def}\label{trivialzero}
Let $s_j \in \mathbb{Z}_-$. $\zeta (s_1, \ldots, s_r)=0$ trivially if there exists some $1 \leq i \leq r-1$ such that $r-i > L_{-s_i}$. We call such zeros \textit{trivial zeros}. Other zeros are called \textit{nontrivial}.
\end{Def}

In the works of Goss\cite{Goss1}, Dinesh\cite{T091}, and Shuhui\cite{Shuhui}, we have 
\begin{Th}
Let $K=\mathbb{F}_q(t)$, $\mathbf{s}=(s_1, \cdots, s_r)\in \mathbb{Z}^r$ and each $s_i$ shares the same sign, $\zeta(\mathbf{s})=0$ if and only if one of the following conditions holds
\begin{itemize}
    \item[(1)] $r>1$, $\mathbf{s}$ is a trivial zero,
    \item[(2)] $r=1$, $s$ is $q-$even.
\end{itemize}
\end{Th}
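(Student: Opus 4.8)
The plan is to reduce the whole statement to the arithmetic of the power sums $S_d(-k)$ and to treat the two signs separately. Since a positive tuple yields $\zeta(\mathbf{s})\in K_\infty$ that is classically nonzero (its $1/t$-expansion has a surviving leading term of least $\infty$-adic valuation), every zero occurs at a negative argument; so I take $s_i=-k_i$ with $k_i\ge 1$ and read condition (2) with $s<0$. The first step is to make $\zeta(\mathbf{s})$ manifestly a finite sum. Because $\big(\sum_{a\in A_{d+}}a^{k}\big)^{p}=\sum_{a\in A_{d+}}a^{kp}$, one has $S_d(-kp)=S_d(-k)^{p}$, so the vanishing of $S_d(-k)$ depends on $k$ only through $\min_{0\le i<f} l(kp^{i})$; together with the Carlitz--Lee--Sheats evaluation this yields the sharp equivalence $S_d(-k)=0\iff d>L_k$. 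Consequently
\[
\zeta(\mathbf{s})=\sum_{d_1>\cdots>d_r\ge 0}\ \prod_{j=1}^{r} S_{d_j}(-k_j)\ \in\ A,
\]
a polynomial in which only the index tuples with every $d_j\le L_{k_j}$ contribute.

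For the implication that (1) and (2) force $\zeta(\mathbf{s})=0$, the depth $>1$ case is immediate. If $\mathbf{s}$ is a trivial zero, fix $i\le r-1$ with $r-i>L_{k_i}$. In any admissible tuple the chain $d_i>d_{i+1}>\cdots>d_r\ge 0$ forces $d_i\ge r-i>L_{k_i}$, hence $S_{d_i}(-k_i)=0$; as this factor occurs in \emph{every} summand, $\zeta(\mathbf{s})=0$. For $r=1$ and $s=-k$ with $k$ $q$-even, the vanishing of $\zeta(-k)=\sum_{d\le L_k}S_d(-k)$ is Goss's theorem on the trivial zeros of the Carlitz--Goss zeta function; it is a genuine cross-level cancellation (for $q=2$, where every $k$ is $q$-even, one checks directly that $1+S_1(-k)+\cdots=0$) and is proved through the Bernoulli--Carlitz numbers rather than by a single-term estimate.

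The reverse implication — that a non-trivial configuration gives $\zeta(\mathbf{s})\neq 0$ — is the heart of the matter. The strategy is to locate the leading behaviour of the polynomial $\zeta(\mathbf{s})$ by maximizing the total degree $\sum_j\deg_t S_{d_j}(-k_j)$ over admissible tuples. By Sheats' theorem the nonzero $S_d(-k)$ have explicitly computable $t$-degree, governed by a greedy arrangement of the base-$q$ digits of $k$, and a nonzero leading coefficient in $\mathbb{F}_q^{\times}$. The non-trivial-zero hypothesis $r-j\le L_{k_j}$ (for $j\le r-1$) guarantees that at least one admissible tuple has all factors nonzero, so the maximum is attained. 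The point to establish is that the maximizing tuple is \emph{unique}: then the top-degree coefficient of $\zeta(\mathbf{s})$ is a single product of nonzero leading coefficients, which cannot cancel, whence $\zeta(\mathbf{s})\neq 0$. For $r=1$ and $q$-odd this is exactly the statement that $\deg_t S_d(-k)$ is strictly increasing up to $d=\lfloor L_k\rfloor$, so the top term $S_{\lfloor L_k\rfloor}(-k)$ survives; the contrast with the $q$-even case, where two consecutive degrees coincide and cancel, is precisely what singles out condition (2).

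The main obstacle is this uniqueness of the degree-maximizing tuple, i.e.\ the absence of cancellation among the top-degree summands. For $r=1$ it is Sheats' resolution of the Carlitz zeta nonvanishing, already a delicate optimization over digit patterns; for $r\ge 2$ one must run the same optimization simultaneously in all $r$ coordinates under the constraint $d_1>\cdots>d_r$, and show that the interaction of this chain condition with the greedy degree formula forbids the degeneracies that produce the $q$-even vanishing in depth one. Carrying this out — controlling $\deg_t S_d(-k)$ finely enough to prove a unique joint maximizer exists whenever $\mathbf{s}$ is not a trivial zero — is the technical core supplied by the works of Goss, Thakur and Shuhui, and is where I expect essentially all the difficulty to lie.
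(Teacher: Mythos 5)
The overall frame you set up --- reduce to negative arguments, truncate via $S_d(-k)=0\iff d>L_k$, note that a trivial zero kills a factor of every summand, and prove non-vanishing by exhibiting a unique dominant term --- is the right one, and your forward direction is fine. (The paper itself only quotes this theorem from Goss, Thakur and Shi, but its Section~2 reruns the same mechanism $v$-adically, which makes the intended engine visible.) The problem is the engine you chose for the converse: dominating by \emph{maximal} $t$-degree via Sheats' greedy element and claiming the degree-maximizing tuple $(d_1,\dots,d_r)$ is unique. That uniqueness is not merely the hard part; it is false. Take $q=3$ and $\mathbf{s}=(-17,-2)$: here $L_{17}=5/2$ and $L_2=1$, so $\mathbf{s}$ is not a trivial zero and the admissible tuples are $(1,0),(2,0),(2,1)$. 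One computes $\deg_t S_1(-17)=15$ and $\deg_t S_2(-17)=24$ (with leading coefficient $\binom{17}{9,6,2}\equiv 1$), while $S_0(-2)=1$ and $S_1(-2)=2$ are both constants; so the maximal total degree $24$ is attained at \emph{both} $(2,0)$ and $(2,1)$, and the two leading contributions $1\cdot t^{24}$ and $2\cdot t^{24}$ cancel mod $3$. Indeed $S_0(-2)+S_1(-2)=0$, so the inner sum over $d_2$ vanishes for every $d_1\ge 2$ and $\zeta(-17,-2)=S_1(-17)$, of degree $15$, not $24$. The failure is exactly the $q$-even phenomenon you flagged in depth one, resurfacing in the inner coordinates: whenever some $s_j$ with $j\ge 2$ is $q$-even, consecutive greedy degrees $\deg_t S_d(s_j)$ tie and their leading coefficients cancel.

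The cited proofs (and the paper's own $v$-adic argument) work at the opposite end of the polynomial: they bound the \emph{minimal} $t$-degree, i.e.\ the $t$-adic valuation, using the ``modest'' element of \Cref{modest} in place of the greedy one. The valuations $v_t(S_d(s))$ are strictly increasing in $d$ for $d\ge 1$ irrespective of $q$-parity; the only possible tie is $v_t(S_1(s))=v_t(S_0(s))=0$, which occurs exactly when $s$ is $q$-even and is the source of the depth-one trivial zeros. In depth $r>1$ the chain $d_1>\cdots>d_r\ge 0$ forces $d_i\ge r-i\ge 1$ for all $i<r$, and any tuple other than the staircase $(r-1,\dots,1,0)$ strictly increases the valuation in some coordinate $i<r$; hence the staircase term is the unique minimizer of $v_t$ and its coefficient is a product of nonzero terms. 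If you replace your top-degree optimization by this bottom-degree one, the rest of your outline goes through, with the remaining inputs being \Cref{modest}, the strict monotonicity of $v_t(S_d(s))$ in $d\ge 1$, and Goss's separate argument for the $r=1$, $q$-even vanishing, which you correctly treat as a citation.
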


\section{Trivial Zeros for $v-$adic MZVs}
To study $\zeta_v(\mathbf{s})$ at negative integers, we first study the behaviour of $S_d(s)$ for $s < 0$.
\begin{align}
\nonumber
S_d(s) &= \sum_{\theta_i \in \mathbb{F}_q} (t^d+\theta_{1} t^{d-1}+ \cdots +\theta_d)^{-s} \\\nonumber
&= \sum_{\theta_i \in \mathbb{F}_q}\sum_{\substack{m_0+\cdots +m_d = -s\\ m_i \geq 0}} \binom{-s}{m_0, \ldots, m_d} \theta_1^{m_1} \cdots \theta_d^{m_d} t^{dm_0+(d-1)m_1+\cdots +m_{d-1}}\\\nonumber
&= (-1)^d\sum_{\substack{m_0+\cdots +m_d = -s\\ m_0 \geq 0, (q-1) | m_i > 0, 1 \leq i \leq d}} \binom{-s}{m_0, \ldots, m_d} t^{dm_0+(d-1)m_1+\cdots +m_{d-1}}\\
&= (-1)^d\sum_{\substack{\oplus_{0}^d m_i= -s\\ m_0 \geq 0, (q-1) | m_i > 0, 1 \leq i \leq d}} \binom{-s}{m_0, \ldots, m_d} t^{dm_0+(d-1)m_1+\cdots +m_{d-1}} \label{Sd(n)}
\end{align}
where $\oplus_{0}^d m_i$ denotes sum $\sum_{i=0}^d m_i$ with no carry over of digits base $p$. The third equality comes from exchanging two sum indices and the fact that $\sum_{\theta \in \mathbb{F}_q} \theta^k=-1$ if $k \neq 0, (q-1) | k$ and 0 otherwise. The last equality is the application of Lucas' theorem saying that the multinomial coefficient $\binom{-s}{m_1, \ldots, m_d}$ vanishes in $\mathbb{F}_q$ iff the sum $\sum m_i$ has carry over base $p$. Obviously, $S_d(s)$ vanishes if sum (\ref{Sd(n)}) is empty. 

In \cite{Car48}, Carlitz claimed the converse also holds. More precisely, he asserted that, if sum (\ref{Sd(n)}) is not empty, the term $t^{dm_0+(d-1)m_1+\cdots +m_{d-1}}$ with $(m_0, \ldots, m_d)$ lexicographically largest among the sum indices attains the unique maximal degree. Such $(m_0, \ldots, m_d)$ is called \textit{greedy}. This was proved by Diaz-Vargas in \cite{DV} for $q=p$ and Sheats in \cite{Sheats} for general $q$.
\begin{Th}[Calitz \& Sheats]\label{greedy}
For $k>0, d \geq 0$, denote by $U_d(k)$ the collection of all $(d+1)$-tuples $(m_0, \ldots, m_d)$ such that (1) $k=m_0 \oplus \cdots \oplus m_d$, and (2) for $1 \leq i \leq d$, $m_i >0$ and $(q-1)| m_i$. Then, for $s< 0$, $S_d(s) \neq 0$ if and only if $U_d(-s) \neq \emptyset$. Moreover, the term corresponding to the greedy element achieves the unique maximal degree.
\end{Th}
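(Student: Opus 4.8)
The plan is to split the statement into its easy and hard halves and reduce the hard half to a single extremal problem. If $U_d(-s)=\emptyset$ then the sum in the expansion \eqref{Sd(n)} is empty, so $S_d(s)=0$; this is the trivial direction. For the converse, write $k=-s>0$ and assign to each tuple $\mathbf{m}=(m_0,\ldots,m_d)\in U_d(k)$ the degree $D(\mathbf{m})=\sum_{i=0}^d(d-i)m_i$, the exponent of $t$ attached to $\mathbf{m}$ in \eqref{Sd(n)}. Everything reduces to the claim that the greedy (lexicographically largest) element $\mathbf{g}\in U_d(k)$ satisfies $D(\mathbf{g})>D(\mathbf{m})$ for every other $\mathbf{m}\in U_d(k)$. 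Granting this, the coefficient of $t^{D(\mathbf{g})}$ in $(-1)^d S_d(s)$ is the single multinomial coefficient $\binom{k}{g_0,\ldots,g_d}$, which is nonzero in $\mathbb{F}_q$ by Lucas' theorem because the defining condition of $U_d(k)$ is exactly the absence of base-$p$ carries; since $D(\mathbf{g})$ strictly dominates all other exponents there is no cancellation, and hence $S_d(s)\neq 0$.

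Next I would recast the extremal problem. Because the $\oplus$-sum produces no carries, $\sum_{i=0}^d m_i=k$ as ordinary integers for every $\mathbf{m}\in U_d(k)$, so $D(\mathbf{m})=dk-\sum_{i=1}^d i\,m_i$ and maximizing $D$ is equivalent to minimizing $\Phi(\mathbf{m}):=\sum_{i=1}^d i\,m_i$. I would then pass to the base-$p$ digit picture: writing $k=\sum_j k_j p^j$, an element of $U_d(k)$ is precisely a way of distributing, at each position $j$, the digit $k_j$ among the boxes $0,\ldots,d$, subject to the global constraints that each box $i\ge 1$ receives a positive multiple of $q-1$ while box $0$ is free. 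In this language box $0$ carries weight $0$ in $\Phi$, so minimizing $\Phi$ rewards pushing digit mass toward box $0$ and, among the remaining boxes, toward the small indices; this is exactly the effect of the greedy construction, which makes it the natural candidate for the optimizer.

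The core of the argument is an induction combined with an exchange principle. I would induct on $d$, with base case $d=0$ the unique tuple $(k)$, and at each step argue that some optimizer may be taken to agree with $\mathbf{g}$ in its first coordinate, after which one removes box $0$ and recurses on the distribution into boxes $1,\ldots,d$. The exchange step is the heart: if a candidate optimizer $\mathbf{m}$ first disagrees with $\mathbf{g}$ at index $i$, then necessarily $m_i<g_i$, and I would transfer a suitable block of base-$p$ digit mass from the higher-indexed boxes down to box $i$. Moving an amount $\delta$ from a box $j>i$ into box $i$ changes $\Phi$ by $(i-j)\delta<0$, so the transfer strictly lowers $\Phi$; producing such a legal transfer whenever $\mathbf{m}\neq\mathbf{g}$ contradicts optimality and simultaneously yields the strictness, hence the uniqueness of the maximal degree.

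The step I expect to be the real obstacle is making the exchange legal for general $q=p^f$. When $q=p$ the divisibility condition $(p-1)\mid m_i$ interacts cleanly with individual base-$p$ digits and the transfer can be carried out essentially one unit at a time, which is the route of Diaz--Vargas. For composite $q$ the condition $(q-1)\mid m_i$ with $q-1=p^f-1$ couples several base-$p$ positions at once: one cannot move a single base-$p$ unit, and any admissible transfer must simultaneously preserve the no-carry structure (base $p$), divisibility by $q-1$, and positivity of every box $i\ge 1$. Showing that a degree-increasing, constraint-preserving transfer always exists whenever $\mathbf{m}\neq\mathbf{g}$ is precisely the delicate block-by-block analysis of Sheats, and I would expect to have to reproduce (or invoke) that bookkeeping to complete the general case.
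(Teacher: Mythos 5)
The paper does not actually prove this theorem: it is stated as a quoted classical result, with the easy direction following from the expansion \eqref{Sd(n)} (an empty index set forces $S_d(s)=0$) and the hard direction --- that the greedy element attains the unique maximal degree, leaving a single Lucas-nonvanishing multinomial coefficient --- attributed to Carlitz's claim as proved by Diaz-Vargas for $q=p$ and by Sheats for general $q$. Your setup matches that framework exactly: the trivial direction, the reduction to the extremal problem of minimizing $\Phi(\mathbf{m})=\sum_i i\,m_i$ (equivalently maximizing the degree, since $\sum_i m_i=k$ for every carry-free decomposition), and the observation that strict dominance of the greedy exponent prevents cancellation.

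As a standalone proof, however, your proposal has a genuine gap, and it is the one you yourself flag: the existence, for every non-greedy $\mathbf{m}\in U_d(k)$, of a transfer of base-$p$ digit mass from some box $j>i$ down to the first disagreement index $i$ that simultaneously preserves the no-carry condition, keeps every $m_\ell$ with $\ell\ge 1$ a positive multiple of $q-1$, and strictly decreases $\Phi$. For $q=p$ this can be arranged essentially one digit at a time, but for $q=p^f$ with $f>1$ the divisibility constraint couples $f$ base-$p$ positions at once, and a naive single-digit move generally destroys either the divisibility or the positivity of some intermediate box; establishing that an admissible exchange always exists is precisely the content of Sheats' paper and is not reproduced in your sketch. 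If your intention is to invoke Sheats for that step, your treatment is consistent with (indeed more detailed than) the paper's, which offers nothing beyond the citation; if the intention is a self-contained argument, the decisive combinatorial core is missing.
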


\begin{Prop}\label{vanishcondition}
$S_d(s) = 0 \Leftrightarrow d > L_{-s}$.
\end{Prop}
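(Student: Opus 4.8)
The plan is to reduce the statement to a purely combinatorial count and then split it into an easy inequality and a harder construction. Write $k=-s>0$. By \Cref{greedy}, $S_d(s)\neq 0$ if and only if $U_d(k)\neq\emptyset$, so it suffices to prove that $U_d(k)\neq\emptyset\iff d\le L_k$ (for an integer $d\ge 0$ the condition $d\le L_k$ is exactly the negation of $d>L_k$). First I would record the monotonicity observation that nonemptiness of $U_d(k)$ is downward closed in $d$: given $(m_0,\dots,m_d)\in U_d(k)$ with $d\ge 1$, merging $m_d$ into $m_0$ (no carry occurs since each base-$p$ digit of $m_0+m_d$ is bounded by the corresponding digit of $k$, whence $m_0+m_d=m_0\oplus m_d$) produces an element of $U_{d-1}(k)$. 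Hence it is enough to locate the threshold value of $d$.

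The main tool is the base-$q$ digit sum $l(\cdot)$, and two facts drive everything. First, $l$ is additive across a no-carry base-$p$ sum: if $N=a\oplus b$ then the base-$p$ digits, and therefore the base-$q$ digits, add without carrying, so $l(N)=l(a)+l(b)$. Second, since $q\equiv 1\pmod{q-1}$ we have $m\equiv l(m)\pmod{q-1}$, so a positive multiple of $q-1$ has base-$q$ digit sum a positive multiple of $q-1$, hence $\ge q-1$. For the easy direction, take $(m_0,\dots,m_d)\in U_d(k)$ and fix $0\le i\le f-1$. Multiplication by $p^i$ preserves the no-carry decomposition, so $l(kp^i)=\sum_{j=0}^d l(m_jp^i)$; for $1\le j\le d$ the factor $m_jp^i$ is again a positive multiple of $q-1$, giving $l(m_jp^i)\ge q-1$. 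Therefore $l(kp^i)\ge d(q-1)$ for every $i$, and dividing and minimizing over $i$ yields $d\le L_k$.

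For the converse I would argue constructively, and by the merging remark it suffices to realize the maximal value $d=\lfloor L_k\rfloor$. Reformulate $U_d(k)$ in terms of the base-$p$ ``units'' of $k$: a unit sitting at base-$p$ position $j$ carries weight $p^{j\bmod f}$ toward a base-$q$ digit sum, the total weight of all units is $l(k)$, and multiplying $k$ by $p^i$ cyclically shifts these weights so that the total weight becomes $l(kp^i)$. Constructing an element of $U_d(k)$ then amounts to distributing the units into $d$ groups, each of total weight a positive multiple of $q-1$ (the leftover units forming $m_0$). Since multiplication by a fixed power $p^{i^*}$ induces a bijection between $U_d(k)$ and $U_d(kp^{i^*})$, I may first normalize so that $l(k)=\min_i l(kp^i)$, i.e. so that the total weight is already the minimal one among all cyclic shifts. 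I would then extract admissible groups one at a time, maintaining the invariant that the number of groups still to be formed, say $d'$, satisfies $d'\le L_{k'}$ for the remaining value $k'$; concretely one seeks a positive multiple $m$ of $q-1$ with $k'=(k'-m)\oplus m$ and $l(mp^i)\le l(k'p^i)-(d'-1)(q-1)$ for all $i$, and removes it.

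The hard part will be proving that such an admissible group always exists at each stage. The naive choice---a ``balanced'' group of weight exactly $q-1=\sum_{l=0}^{f-1}(p-1)p^l$ using $p-1$ units from each sub-position---need not be available when some sub-position is under-supplied, and then one is forced to use heavier groups concentrated on the low sub-positions (as already happens in simple examples with $f=2$). Showing that the count $\lfloor L_k\rfloor$ is nonetheless always attained is exactly where the normalization $l(k)=\min_i l(kp^i)$ must be exploited, together with careful bookkeeping of the unit weights modulo $q-1$; this is the Sheats-type combinatorial core of the argument, and I expect it to be the only step requiring real work.
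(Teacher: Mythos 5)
There is a genuine gap. Your reduction via \Cref{greedy} to the statement ``$U_d(k)\neq\emptyset\iff d\le L_k$'' matches the paper, and your proof of the forward implication is correct and self-contained: the no-carry base-$p$ decomposition $kp^i=\bigoplus_j m_jp^i$ forces the base-$q$ digit sums to add, each $m_jp^i$ with $j\ge 1$ is a positive multiple of $q-1$ and so contributes at least $q-1$ to $l(kp^i)$, giving $l(kp^i)\ge d(q-1)$ for every $i$ and hence $d\le L_k$. The downward-closure (merging) observation is also fine. But the converse --- that $d\le L_k$ actually forces $U_d(k)\neq\emptyset$ --- is the entire substance of the proposition, and you do not prove it: you sketch a greedy extraction of admissible groups and then explicitly defer the existence of such a group at each stage as ``the Sheats-type combinatorial core.'' That existence claim is not a routine bookkeeping step. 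For $f=1$ a greedy unit-by-unit extraction works, but for $f>1$ the interplay between the base-$p$ no-carry condition and the base-$q$ digit sums over all cyclic shifts $l(kp^i)$ is precisely the difficult combinatorial lemma in Sheats' proof of the Riemann hypothesis for the Goss zeta function, and your normalization $l(k)=\min_i l(kp^i)$ by itself does not produce the required piece.

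The paper does not attempt this construction either: it quotes Sheats' result that $V_d(k)=\emptyset\iff d>L_k$ and then does a short case analysis ($k$ $q$-even versus $q$-odd) relating $U_d(k)$ to $V_d(k)$ and $V_{d-1}(k)$, using that $L_k$ is an integer exactly when $k$ is $q$-even. So either complete the construction (essentially reproving \cite[Prop.~4.3(a)]{Sheats}), or cite that lemma and supply the $U_d$-versus-$V_d$ translation; as written, your argument establishes only one of the two implications.
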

The proposition follows easily from the following lemma proved by Sheats and it was already pointed out by Boeckle. The notations and expression of the lemma are different from those in Sheats' paper, but it's easy to see the statements are equivalent.
\begin{Lemma}\textnormal{(\cite[Prop. 4.3(a)]{Sheats})}
$V_d(k) = \emptyset \Leftrightarrow d > L_k.$
\end{Lemma}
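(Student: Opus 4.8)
The plan is to pin down, for a fixed $k>0$, the largest $d$ for which $V_d(k)\neq\emptyset$ and to show this threshold is governed by $L_k$; the biconditional then follows from a monotonicity observation. Indeed, given any tuple in $V_{d+1}(k)$ I can replace its first two entries $m_0,m_1$ by their carry-free sum $m_0\oplus m_1>0$, which again occupies the unconstrained slot, producing a tuple in $V_d(k)$. Hence $\{d:V_d(k)\neq\emptyset\}$ is a down-set $\{0,1,\dots,D\}$, and it suffices to locate $D$. The single tool driving both halves is additivity of digit sums under carry-free addition: if $a\oplus b$ has no carry base $p$ then it has none base $q=p^f$ either (each base-$p$ digit of the sum is $\leq p-1$, so no carry can cross a base-$q$ block boundary), whence the base-$q$ digit sum $l$ is additive, $l(a\oplus b)=l(a)+l(b)$.

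For the direction $d>L_k\Rightarrow V_d(k)=\emptyset$ I would argue as follows. Suppose $(m_0,\dots,m_d)\in V_d(k)$, so $k=m_0\oplus\cdots\oplus m_d$ with each $m_i$ $(i\geq1)$ a positive multiple of $q-1$. Since $\gcd(q-1,p)=1$, multiplying through by any $p^j$ preserves both the carry-free property and the divisibility $(q-1)\mid m_ip^j$, so $kp^j=m_0p^j\oplus\cdots\oplus m_dp^j$ is again carry-free with $d$ of its parts positive multiples of $q-1$. A positive multiple of $q-1$ has base-$q$ digit sum $\equiv0\pmod{q-1}$, hence $\geq q-1$. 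Applying additivity of $l$ and discarding the nonnegative contribution of $m_0p^j$ gives $l(kp^j)\geq d(q-1)$ for every $j=0,\dots,f-1$, so that $d\leq\min_j l(kp^j)/(q-1)=L_k$, as required.

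For the reverse direction I would construct an element of $V_d(k)$ by induction on $d$, peeling off one positive multiple of $q-1$ at a time. The base case $d=0$ is immediate, since $V_0(k)=\{(k)\}$ and $k>0$. The inductive step rests on the following key claim: if $L_k\geq 1$ then $k$ admits a carry-free splitting $k=m\oplus k'$ with $k'>0$, with $m$ a positive multiple of $q-1$, and with $L_{k'}=L_k-1$. Granting this, whenever $d\leq L_k$ we have $L_{k'}=L_k-1\geq d-1$, so by induction $k'$ carries a $V_{d-1}(k')$-decomposition; the unconstrained coordinate of that decomposition serves as $m_0$, while $m$ joins the existing $d-1$ multiples of $q-1$ to furnish $d$ of them, and carry-freeness of the whole is inherited from $k=m\oplus k'$.

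The heart of the matter—and the step I expect to be the main obstacle—is precisely this key claim: the existence of a carry-free positive multiple of $q-1$ sitting \emph{inside} $k$ and lowering $L$ by exactly one. The difficulty is twofold. First, $m$ generally cannot be taken to fill a single base-$q$ digit; it may have to be spread across several base-$p$ positions of $k$, so one must exhibit a genuine carry-free sub-decomposition rather than a mere subtraction. Second, the quantity to be decreased is the \emph{minimum} $L_k=\min_j l(kp^j)/(q-1)$ over the cyclic shifts $p^j$, so $m$ must be chosen compatibly with the shift attaining that minimum: one needs $l(mp^j)=q-1$ at the critical shift (note $m=q-1$ itself satisfies $l((q-1)p^j)=q-1$ for all $j$, which suggests the right local model) while ensuring $l(k'p^j)\geq(q-1)(L_k-1)$ for every $j$. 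This is exactly the combinatorial content of Sheats' greedy analysis: at the minimizing shift one selects the lexicographically extremal admissible block of digits whose base-$q$ weight is $q-1$, checks it is carry-free against the remainder, and verifies that its removal drives every $l(kp^j)$ down by the correct amount. I would isolate this existence statement as the technical core and establish it via Sheats' digit bookkeeping, the two framing directions above being the routine additivity arguments.
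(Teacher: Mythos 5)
The paper offers no proof of this lemma at all: it is imported by citation from Sheats (Prop.\ 4.3(a)), with only a remark that the notation has been translated. So the question is whether your self-contained attempt stands on its own, and there is a genuine problem with it. Your monotonicity observation, the additivity $l(a\oplus b)=l(a)+l(b)$ of the base-$q$ digit sum under base-$p$ carry-free addition, and the forward direction are all correct. But notice that your forward argument proves \emph{more} than you record: since $m_0>0$, its contribution satisfies $l(m_0p^j)\geq 1$, so keeping it instead of ``discarding the nonnegative contribution'' gives $l(kp^j)\geq d(q-1)+1$ for every $j$, hence $V_d(k)\neq\emptyset \Rightarrow d\leq L_k-\tfrac{1}{q-1}<L_k$. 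When $k$ is $q$-even (so $L_k\in\mathbb{Z}$) and $d=L_k$, this shows $V_d(k)=\emptyset$ even though $d\not>L_k$ --- refuting the biconditional with strict inequality that you set out to prove. A concrete instance: $q=3$, $k=2$, $d=1$, where $L_k=1$ but the only positive multiple of $q-1$ sitting carry-free inside $k$ is $2$ itself, forcing $m_0=0$; so $V_1(2)=\emptyset$. The statement as printed is off by one; the correct threshold is $V_d(k)=\emptyset \Leftrightarrow d\geq L_k$, which is in fact the version the paper silently uses in its proof of \Cref{vanishcondition} (``$V_d(k)=V_{d-1}(k)=\emptyset$, i.e.\ $d-1\geq L_k$'' and, in the $q$-odd case, ``$d\geq L_k$'').

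This same off-by-one kills your key claim exactly as formulated: for $k=2$, $q=3$ one has $L_k=1\geq 1$, yet no carry-free splitting $k=m\oplus k'$ with $(q-1)\mid m>0$ and $k'>0$ exists, so the inductive engine of your reverse direction stalls at its first application. The repair is to run the induction only for $d<L_k$ (equivalently $d\leq\lceil L_k\rceil-1$), with the claim weakened to the existence of a splitting satisfying $L_{k'}\geq L_k-1$. Even after that repair, however, your proposal does not actually establish the claim: you explicitly defer its proof to ``Sheats' digit bookkeeping,'' and since that existence statement carries the entire content of the hard direction, what you have is a correct (modulo the threshold) reduction plus a citation --- which is no more than the paper itself provides. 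In short: your easy direction is right and is in fact sharp enough to expose the misprint in the statement; your hard direction is both aimed at a false target and, once retargeted, remains Sheats' theorem rather than yours.
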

\begin{proof}[Proof of \Cref{vanishcondition}:]
By \Cref{greedy}, we need to show $U_d(k) = \emptyset$ iff $d > L_k$. There are two cases: $k$ is $q-$even or $q-$odd. We consider each separately. 

If $k$ is $q-$even, $U_d(k)=V_d(k) \cup \{ (0, m_1, \ldots, m_d)\ \big|\ (m_1, \ldots, m_d) \in V_{d-1}(k) \}$. $U_d(k)=\emptyset$ iff $V_d(k)=V_{d-1}(k)=\emptyset$, i.e. $d-1 \geq L_k$. Since $L_k$ is an integer, $d-1 \geq L_k \Leftrightarrow d > L_k$.

If $k$ is $q-$odd, $U_d(k)=V_d(k)$, thus $U_d(k)=\emptyset$ iff $d \geq L_k$.  As $L_k$ is not an integer in this case, $d \geq L_k \Leftrightarrow d > L_k$.
\end{proof}

Applying all the above results, we can show
\begin{Prop}\label{vanishcondition2}
$\widetilde{S}_d(s) = 0$ $\Leftrightarrow$ $d > L_{-s}+deg(v)$ or $deg(v)> d > \lfloor L_{-s}\rfloor$.
\end{Prop}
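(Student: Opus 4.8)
The plan is to reduce the statement to the vanishing criterion \Cref{vanishcondition} for the ordinary power sums $S_d$ via an inclusion–exclusion identity, and then to exclude accidental cancellation by a degree comparison in $t$. Throughout write $n:=-s>0$ and $e:=\deg(v)$.

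First I would record the identity relating $\widetilde{S}_d$ to $S_d$. Splitting the monic polynomials of degree $d$ according to divisibility by $v$, and using that every monic multiple of $v$ of degree $d$ is uniquely $vb$ with $b$ monic of degree $d-e$, one obtains
$$\widetilde{S}_d(s)=S_d(s)-v^{-s}\,S_{d-e}(s),$$
with the conventions $S_0(s)=1$ and $S_{d-e}(s)=0$ for $d<e$. By \Cref{vanishcondition} one has $S_d(s)=0\iff d>L_{-s}$ and $S_{d-e}(s)=0\iff d>L_{-s}+e$ (the latter also holding when $d<e$). I would then split into cases. If $d>L_{-s}+e$ both summands vanish; if $L_{-s}<d<e$ both again vanish (here $S_{d-e}(s)=0$ because $d<e$ and $S_d(s)=0$ because $d>L_{-s}$); these two regimes give $\widetilde{S}_d(s)=0$ and are exactly those in the statement, once one notes that $d>L_{-s}\iff d>\lfloor L_{-s}\rfloor$ for integer $d$. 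In all remaining regimes at least one summand is nonzero. When exactly one is nonzero the conclusion $\widetilde{S}_d(s)\neq0$ is immediate (for instance if $e\le d$ and $L_{-s}<d\le L_{-s}+e$, then $S_d(s)=0$ but $S_{d-e}(s)\neq0$, so $\widetilde{S}_d(s)=-v^{-s}S_{d-e}(s)\neq0$). The only genuine case is $e\le d\le L_{-s}$, where $S_d(s)$ and $S_{d-e}(s)$ are both nonzero and I must rule out exact cancellation.

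For this crux I would compare degrees in $t$. By \Cref{greedy} the greedy term of $S_d(s)$ has the unique maximal degree, and rewriting the exponent in (\ref{Sd(n)}) as $\sum_{i=0}^{d-1}(d-i)m_i=dn-\sum_{i=0}^{d}i\,m_i$ shows
$$\deg_t S_d(s)=dn-M(d,n),\qquad M(d,n):=\min_{(m_0,\dots,m_d)\in U_d(n)}\ \sum_{i=0}^{d}i\,m_i.$$
Since $\deg_t v^{-s}=ne$, this gives $\deg_t\!\big(v^{-s}S_{d-e}(s)\big)=ne+(d-e)n-M(d-e,n)=dn-M(d-e,n)$, so the two summands share a degree exactly when $M(d,n)=M(d-e,n)$. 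It therefore suffices to prove that $M(\cdot,n)$ is strictly increasing where $U_d(n)\neq\emptyset$. I would show $M(d-1,n)<M(d,n)$ directly: take a minimizing tuple $(m_0,\dots,m_d)\in U_d(n)$; since $m_0\oplus\cdots\oplus m_d=n$ means the base-$p$ addition of all entries is carry-free, the entries $m_0$ and $m_d$ in particular add without carry, and merging them into $m_0+m_d=m_0\oplus m_d$ leaves every digit sum unchanged, so $(m_0+m_d,m_1,\dots,m_{d-1})\in U_{d-1}(n)$; its cost is $M(d,n)-d\,m_d<M(d,n)$ because $m_d\ge q-1>0$. Iterating from $d$ down to $d-e$ yields $M(d-e,n)<M(d,n)$, hence $\deg_t\!\big(v^{-s}S_{d-e}(s)\big)>\deg_t S_d(s)$; the leading term of $-v^{-s}S_{d-e}(s)$ then survives in $\widetilde{S}_d(s)$, so $\widetilde{S}_d(s)\neq0$.

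The main obstacle is precisely this last case: ruling out cancellation between two nonzero summands. The tempting shortcut that $\deg_t S_d(s)$ is itself increasing in $d$ is false — already for $q=2$, $s=-3$ one has $\deg_t S_1=\deg_t S_2=2$ — so one cannot argue on the degrees of $S_d$ directly. The correct monotone invariant is $M(d,n)$, and the crux is the carry-free ``fold-down'' of the extremal tuple, which rests on the combinatorial structure furnished by \Cref{greedy}; everything else is bookkeeping around \Cref{vanishcondition}.
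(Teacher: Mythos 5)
Your proof is correct, but at the decisive step it takes a genuinely different route from the paper. Both arguments begin with the same decomposition $\widetilde{S}_d(s)=S_d(s)-v^{-s}S_{d-\deg v}(s)$ and the same bookkeeping via \Cref{vanishcondition}; the only real content is ruling out cancellation in the regime $\deg v\le d\le L_{-s}$, where both summands are nonzero. The paper resolves this by comparing \emph{lowest}-degree terms: it appeals to the uniqueness of the minimal-degree (``modest'') term of $S_d(s)$ (\Cref{modest}, imported from Shi's thesis) and to strict monotonicity in $d$ of the resulting valuations, which forces a split into the cases $v=t$ (where one must pass to the $(t+1)$-adic valuation, since $v_t(t^{-s})$ shifts the comparison) and $v\neq t$, plus a separate treatment of $d=\deg v=1$ via the expansion at infinity. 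You instead compare \emph{top}-degree terms: writing $\deg_t S_d(s)=dn-M(d,n)$ with $M(d,n)=\min_{U_d(n)}\sum_i i\,m_i$ --- legitimate by \Cref{greedy}, since the greedy term is the unique one of maximal degree, so no cancellation occurs at the top --- and proving $M(d-1,n)<M(d,n)$ by the carry-free fold-down $(m_0,\dots,m_d)\mapsto(m_0+m_d,m_1,\dots,m_{d-1})$, which lands in $U_{d-1}(n)$ and drops the cost by $d\,m_d\ge d(q-1)>0$. Iterating gives $\deg_t\bigl(v^{-s}S_{d-\deg v}(s)\bigr)>\deg_t S_d(s)$, so the leading term survives. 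Your approach buys a uniform treatment of all primes $v$ (since $\deg_t v^{-s}=n\deg v$ whether or not $v=t$), eliminates the special case $d=\deg v=1$, and depends only on the quoted Carlitz--Sheats theorem rather than on \Cref{modest}; your observation that $\deg_t S_d(s)$ itself need not be monotone in $d$ (e.g.\ $q=2$, $s=-3$) correctly identifies why the invariant has to be $M(d,n)$ and not the degree itself.
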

\begin{proof}
($\Leftarrow$) This direction follows easily from the above \Cref{vanishcondition} and the definition of $\widetilde{S}_d(s)$.\\
($\Rightarrow$) When $d<deg(v)$, $\widetilde{S}_d(s)=S_d(s)$, and hence this power sum is zero iff $d > \lfloor L_{-s}\rfloor$ by \Cref{vanishcondition}. When $deg(v)\leq d\leq L_{-s}+deg(v)$, we have $\widetilde{S}_d(s)=S_d(s)-v^{-s}S_{d-deg(v)}(s)$.

If $v=t$, we have $\widetilde{S}_d(s)=S_d(s)-t^{-s}S_{d-1}(s)$. Consider the $(t+1)-$adic valuation of $S_d(s)$ and $v^{-s}S_{d-deg(v)}(s)$ respectively.
Since $t^{-s}S_{d-1}$ has same $(t+1)-$adic valuation as $S_{d-1}$, which equals $\nu_{d-1}(s)$, so if $d>1$, by above proposition, we have $\nu_{d}(s)>\nu_{d-1}(s)$, hence $\widetilde{S}_d(s)=S_d(s)-t^{-s}S_{d-1}(s)\neq 0$.

If $v\neq t$, similarly, we have $\widetilde{S}_d(s)=S_d(s)-v^{-s}S_{d-1}(s)$. Consider the $t-$adic valuation of $S_d(s)$ and $v^{-s}S_{d-deg(v)}(s)$ respectively.
Since $v^{-s}S_{d-1}$ has same $t-$adic valuation as $S_{d-1}$, which equals $\nu_{d-1}(s)$, so if $d>1$, by above proposition, we have $\nu_{d}(s)>\nu_{d-1}(s)$, hence $\widetilde{S}_d(s)=S_d(s)-t^{-s}S_{d-1}(s)\neq 0$.

It remains to prove the case when $deg(v)=d=1$. Without loss of generality, we may assume $v=t$, then we have $\widetilde{S}_1(s)=\sum_{\theta\in \mathbb{F}_q^*}(t+\theta)^{-s}\neq 0$ since, for example, its Laurent series is  $(q-1)t^{-s}$ plus terms of higher valuations at infinity. 
\end{proof}

\begin{Cor}
Given a rational function field $K=\mathbb{F}_q(t)$, $(s_i)_{i=1}^r \in \mathbb{Z}^r$, $r>1$.

$\zeta_v(s_1, \ldots, s_r)=0$ if either one of the following conditions holds
\begin{itemize}
    \item[(1)] $\exists1 \leq i \leq r$ such that $s_i<0$, $r-i > L_{-s_i} + deg(v)$.
    \item[(2)] $\exists1 \leq i,j\leq r$ such that $s_i,s_j<0$, $deg(v)>r-i > L_{-s_i}$, and $i-j>L_{-s_j}$.
\end{itemize}
\end{Cor}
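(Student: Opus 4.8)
The plan is to show that under either hypothesis \emph{every} summand of
$\zeta_v(s_1,\ldots,s_r)=\sum_{d_1>\cdots>d_r\geq 0}\widetilde{S}_{d_1}(s_1)\cdots\widetilde{S}_{d_r}(s_r)$
vanishes, which immediately forces the whole series to vanish. The only inputs needed are the vanishing criterion of \Cref{vanishcondition2} together with the elementary observation that a strictly decreasing chain $d_1>\cdots>d_r\geq 0$ of nonnegative integers satisfies $d_k\geq r-k$ for every $k$, and more generally $d_a\geq d_b+(b-a)$ whenever $a<b$.

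For condition (1) this is immediate: since $d_i\geq r-i>L_{-s_i}+\deg(v)$, the index $d_i$ always lies in the first vanishing range of \Cref{vanishcondition2}, so $\widetilde{S}_{d_i}(s_i)=0$ in every term and there is nothing further to check.

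For condition (2) the argument is a dichotomy on the size of $d_i$, and this is the step I expect to be the real content. Note first that $i-j>L_{-s_j}\geq 0$ forces $j<i$, so $d_j$ is an earlier, hence larger, index than $d_i$. Now fix a term. If $d_i<\deg(v)$, then from $d_i\geq r-i$ and $r-i>L_{-s_i}$ (with $r-i$ an integer, so $r-i\geq\lfloor L_{-s_i}\rfloor+1$) we obtain $\deg(v)>d_i>\lfloor L_{-s_i}\rfloor$, which is exactly the second vanishing range, so $\widetilde{S}_{d_i}(s_i)=0$. If instead $d_i\geq\deg(v)$, then propagating this lower bound up the chain via $d_j\geq d_i+(i-j)$ and using $i-j>L_{-s_j}$ gives $d_j\geq\deg(v)+(i-j)>L_{-s_j}+\deg(v)$, so now the $j$-th factor $\widetilde{S}_{d_j}(s_j)=0$. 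In either case some factor of the term is zero, so the term vanishes.

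The only point requiring care is the borderline bookkeeping in the sub-case $d_i<\deg(v)$: one must convert the strict inequality $r-i>L_{-s_i}$ (where $L_{-s_i}$ may be non-integral) into the clean statement $d_i>\lfloor L_{-s_i}\rfloor$ before invoking \Cref{vanishcondition2}. Everything else is a direct substitution into the two vanishing ranges, so I do not anticipate further obstacles; the essential mechanism is that a would-be nonzero $i$-th factor forces $d_i$ to be large, which in turn pushes the earlier index $d_j$ past the threshold that kills the $j$-th factor.
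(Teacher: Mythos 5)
Your proof is correct and follows essentially the same route as the paper's: condition (1) is handled by noting $d_i\geq r-i$ puts $d_i$ in the first vanishing range of \Cref{vanishcondition2}, and condition (2) by the same dichotomy (the paper phrases it contrapositively as ``$\widetilde{S}_{d_i}(s_i)\neq 0$ forces $d_i\geq \deg(v)$, hence $d_j\geq \deg(v)+i-j>L_{-s_j}+\deg(v)$''). Your explicit handling of the floor in the borderline sub-case is a welcome extra degree of care that the paper glosses over.
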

\begin{proof}
If $\exists i$ such that $r-i>L_{-s_i}+deg(v)$, then since the least relevant $d_i\geq r-i$, we apply above proposition and get $\widetilde{S}_{d_{i}}(s_i)=0$. Hence $\zeta_v(\mathbf{s})=0$.

If $\exists i,j$ such that $i>j$, $deg(v)>r-i > L_{-s_i}$, and $i-j>L_{s_j}$, then $\widetilde{S}_{d_{j}}(s_j)\widetilde{S}_{d_{i}}(s_i)=0$. This is because that by above proposition, if $\widetilde{S}_{d_{i}}(s_i)\neq 0$, i.e., $d_i\geq deg(v)$, we must have $d_j\geq deg(v)-(r-i)+(r-j)=deg(v)+i-j>deg(v)+L_{-s_j}$, i.e., $\widetilde{S}_{d_{j}}(s_j)=0$. Hence $\zeta_v(\mathbf{s})=0$.
\end{proof}

Inspired by this result, we introduce the following definition.

\begin{Def}\label{trivialzero}
Let $s_j \in \mathbb{Z}$, $r>1$. $\zeta_v(s_1, \ldots, s_r)=0$ trivially iff either one of the following conditions holds
\begin{itemize}
    \item[(1)] $\exists1 \leq i \leq r$ such that $s_i<0$, $r-i > L_{-s_i} + deg(v)$.
    \item[(2)] $\exists1 \leq i,j\leq r$ such that $s_i,s_j<0$, $deg(v)>r-i > L_{-s_i}$, and $i-j>L_{-s_j}$.
\end{itemize}
We call such zeros \textit{trivial zeros}. Other zeros are called \textit{nontrivial}.

In particular, when $deg(v)=1$, we have $\mathbf{s}$ is a trivial zero iff there exists $1\leq i\leq r$ such that $r-i>L_{-s_i}+1$.
\end{Def}

\begin{Def}
$M=(M_0, \ldots, M_d) \in U_d(k)$ is called the \textit{modest element} if\\ $(M_d, M_{d-1}, \ldots, M_0)$ is lexico-graphically the largest, in more details, $M_d \geq m_d$,\\ $\forall (m_0, \ldots, m_d) \in U_d(k)$, $M_{d-1} \geq m_{d-1}$ for those $(m_0, \ldots, m_d)$ with $m_d=M_d$ and so on.
\end{Def}

\begin{Th}\cite{Shuhui}\label{modest}
Assume $S_d(s) \neq 0$, then the term corresponding to the modest element in $U_d(-s)$ attains the unique minimum degree in $t$ among all summands in $S_d(s)$.
\end{Th}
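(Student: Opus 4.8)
The plan is to recast the statement as the optimization of a single linear functional over the finite index set $U_d(-s)$ and then to mirror the greedy analysis behind \Cref{greedy}. Write $k=-s$. By \eqref{Sd(n)} every nonzero summand of $S_d(s)$ is indexed by a tuple $(m_0,\dots,m_d)\in U_d(k)$ and carries the monomial $t^{D(m)}$ with exponent
\[
D(m)=\sum_{i=0}^{d}(d-i)\,m_i \;=\; dk-\sum_{i=0}^{d} i\,m_i ,
\]
where I used that a no-carry base-$p$ sum equal to $k$ forces $\sum_i m_i=k$ as ordinary integers. Hence minimizing the degree $D(m)$ over $U_d(k)$ is exactly equivalent to maximizing the linear functional $T(m):=\sum_{i=0}^{d} i\,m_i$, in perfect duality with \Cref{greedy}, which asserts that the greedy (lexicographically largest $(m_0,\dots,m_d)$) tuple uniquely \emph{maximizes} $D$, i.e.\ uniquely \emph{minimizes} $T$. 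The modest element $M=(M_0,\dots,M_d)$, being lexicographically largest in the reversed coordinates $(m_d,\dots,m_0)$, is the natural candidate to maximize $T$; my whole goal reduces to proving that it is the \emph{unique} maximizer. Note $M$ exists because the hypothesis $S_d(s)\neq 0$ gives $U_d(k)\neq\emptyset$ via \Cref{greedy}.

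My first step is to make the duality precise through the reversal involution $\psi(m_0,\dots,m_d)=(m_d,\dots,m_0)$. A direct computation gives $T(\psi(m))=\sum_i i\,m_{d-i}=dk-T(m)=D(m)$, so maximizing $T$ on $U_d(k)$ is the same as minimizing $T$ on the image set, and $\psi$ carries the modest element to the lexicographically largest tuple in ordinary coordinates. The catch is that $\psi$ does not preserve $U_d(k)$: it maps $U_d(k)$ onto the reflected set $\widetilde U_d(k)$ in which the unconstrained coordinate (the one allowed to be $0$ or not divisible by $q-1$) sits at the \emph{top} index $d$ rather than at index $0$. Thus reversal converts the modest/minimum statement into a Carlitz--Sheats--type greedy/minimization statement for $\widetilde U_d(k)$. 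If the Diaz--Vargas and Sheats analysis underlying \Cref{greedy} is insensitive to \emph{where} the free coordinate is placed, the theorem would follow verbatim with no new combinatorics, and that is the clean route I would attempt first; but because in $\widetilde U_d(k)$ the slot that greedy stuffs is now constrained while the free slot holds the least mass, I do not expect this relocation to be cosmetic, so I would be prepared to argue directly.

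The direct route, and the step I expect to be the main obstacle, is the exchange lemma certifying minimality. Given any $m\in U_d(k)$ that is not modest, let $j$ be the largest index where $m$ disagrees with $M$; by the definition of $M$ one has $m_j<M_j$. I must exhibit a modification $m\rightsquigarrow m'$ in $U_d(k)$ that strictly increases $T$ — moving $(q-1)$-divisible mass from indices below $j$ up to index $j$ — while preserving both the no-carry condition $\bigoplus_{i=0}^d m'_i=k$ and the positivity/divisibility constraints $(q-1)\mid m'_i>0$ for $1\le i\le d$. Establishing that such valid digit reallocations always exist, now performed from the top index downward, is precisely the delicate base-$p$ digit bookkeeping that was genuinely hard for general $q=p^f$ in the maximal case; I anticipate the minimal case to require the analogous, equally intricate, carry-free reshuffling argument, and this is where the real work lies.

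Finally I would record that uniqueness is exactly what the statement needs. The strictness in the exchange step shows $M$ is the \emph{unique} minimizer of $D$, so the lowest-degree monomial appearing in \eqref{Sd(n)} receives a contribution from a single tuple; its multinomial coefficient is nonzero in $\mathbb{F}_q$ by Lucas' theorem (the defining no-carry condition), so no cancellation can push the true minimal degree higher. This yields the claim that the term attached to the modest element attains the unique minimum degree in $t$ among all summands of $S_d(s)$.
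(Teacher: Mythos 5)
The paper does not actually prove this statement --- it is quoted from Shuhui Shi's thesis \cite{Shuhui} --- so there is no internal proof to compare against. Judged on its own terms, your proposal sets up the right framework but stops short of a proof. The reduction is correct and worth keeping: since a carry-free sum equals the ordinary sum, $D(m)=dk-\sum_i i\,m_i$, so minimizing the degree is the same as maximizing $T(m)=\sum_i i\,m_i$, and you correctly observe that the reversal involution does \emph{not} reduce this to \Cref{greedy}, because the asymmetric constraints defining $U_d(k)$ (index $0$ free, indices $1,\dots,d$ positive and $(q-1)$-divisible) are not preserved under reversal. That observation is exactly why the theorem needs its own argument rather than following formally from the Carlitz--Sheats result.

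The gap is that the argument is never given. The entire content of the theorem is the exchange lemma you describe in your third paragraph --- that any non-modest $m\in U_d(k)$ admits a modification inside $U_d(k)$ strictly increasing $T$, compatible with the no-carry condition base $p$ and with the positivity and $(q-1)$-divisibility constraints --- and you explicitly defer it (``this is where the real work lies''). For general $q=p^f$ this digit bookkeeping is precisely the part that took Sheats substantial effort in the dual (greedy) case, and nothing in your write-up establishes that the required carry-free reallocations exist; for instance, raising $m_j$ to $M_j$ may force borrowing $p$-adic digits from lower-indexed slots in a way that destroys divisibility or positivity there, and one must also rule out that some tuple agreeing with $M$ in no particular pattern still ties the value of $T$. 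Until that lemma is proved, or correctly extracted from \cite{Shuhui} or adapted from \cite{Sheats}, the proposal is a plan rather than a proof. Your closing remarks --- that strict improvability of every non-modest tuple yields a unique minimizer of $D$, and that Lucas' theorem guarantees the corresponding coefficient is nonzero so the minimal-degree term cannot cancel --- are correct and would complete the argument once the exchange step is in place.
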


Similar to the case of MZVs, to study the zeros of $v-$adic MZVs, we need the following result.
\begin{Th}\label{unique}
Let $v=t$, we assume $\widetilde{S}_d(s) \neq 0$, then there is one term attains the unique minimum degree in $t$ among all summands in $\widetilde{S}_d(s)$.
\end{Th}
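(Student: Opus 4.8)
The plan is to reduce the theorem to a comparison of two minimal $t$-degrees and then to a single combinatorial identity about modest elements. Since $v=t$ has degree one, for $d\ge 1$ (the case $d=0$ being trivial, as $\widetilde{S}_0(s)=1$) we have $\widetilde{S}_d(s)=S_d(s)-t^{-s}S_{d-1}(s)$; writing $k=-s>0$, the character-sum expansion \eqref{Sd(n)} identifies the summands of $\widetilde{S}_d(s)$ with the tuples of
$$\widetilde{U}_d(k)=U_d(k)\ \sqcup\ \{(n_0,\dots,n_{d-1},0):(n_0,\dots,n_{d-1})\in U_{d-1}(k)\},$$
where the first part ($m_d>0$) contributes $S_d(s)$ and the second ($m_d=0$) contributes $-t^{-s}S_{d-1}(s)$. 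The $t$-degree of a tuple $(m_0,\dots,m_d)$ is $D_d(m):=\sum_{i=0}^{d}(d-i)m_i$. By \Cref{modest} the minimal degree occurring in $S_d(s)$ is attained by the unique modest element of $U_d(k)$; call it $\delta_d$. Likewise the minimal degree in $t^{-s}S_{d-1}(s)$ is $k+\delta_{d-1}$, again uniquely attained. Every multinomial coefficient here is nonzero in $\mathbb{F}_q$ by Lucas's theorem, so I only need to show the two minima never coincide, namely $\delta_d\neq k+\delta_{d-1}$: then the smaller one is realized by a single tuple and survives as the lowest term. If one of $U_d(k),U_{d-1}(k)$ is empty the claim is immediate from \Cref{modest} applied to the other, and they cannot both be empty since $\widetilde{S}_d(s)\neq 0$; so I assume both are nonempty.

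Next I would set up a merge identity. Let $M=(M_0,\dots,M_d)$ be the modest element of $U_d(k)$. Its coordinates are digit-disjoint, so $M_{d-1}\oplus M_d$ is again a positive multiple of $q-1$ and $M':=(M_0,\dots,M_{d-2},M_{d-1}\oplus M_d)$ lies in $U_{d-1}(k)$. A direct computation with $D$ gives
$$D_d(M)-D_{d-1}(M')=\sum_{i=0}^{d-1}M_i=k-M_d,$$
so that $\delta_d=D_{d-1}(M')+(k-M_d)$. Since $M_d$ is a positive multiple of $q-1$, we have $M_d\ge q-1>0$, and therefore it suffices to prove $D_{d-1}(M')=\delta_{d-1}$, i.e.\ that $M'$ is degree-minimizing in $U_{d-1}(k)$. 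This would give $\delta_d=\delta_{d-1}+k-M_d<k+\delta_{d-1}$, so that the minimum over $\widetilde{U}_d(k)$ equals $\delta_d$ and is attained only by $M$, which settles the theorem.

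The crux, and the step I expect to be hardest, is therefore that merging the two lightest coordinates of the modest element of $U_d(k)$ yields the degree-minimizing (indeed modest) element of $U_{d-1}(k)$. I would prove this from Sheats' explicit greedy construction of the modest element: one fills from the weight-zero end, assigning at each step the largest admissible multiple of $q-1$ that still leaves enough digit-mass to complete the remaining forced-positive coordinates. Because this feasibility condition on the remainder depends only on how many positive slots must still be filled, the greedy choices for $U_d(k)$ should restrict, upon merging its two lightest slots, to the greedy choices for $U_{d-1}(k)$, giving $D_{d-1}(M')=\delta_{d-1}$. The delicate point is that the maximal light slot admissible for $U_{d-1}(k)$ may only be realizable in $U_d(k)$ as a split into two positive slots, and the weight relabeling shifts all heavier coordinates; controlling this requires the quantitative feasibility estimates in \cite{Sheats}. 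The case $q=p$ can be verified directly: the modest element places the maximal digit-mass in the lightest coordinate and the minimal required masses in the remaining forced-positive coordinates, and from this explicit shape one checks $\delta_d-\delta_{d-1}<k$ by inspection.
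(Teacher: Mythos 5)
Your reduction is set up correctly and mirrors the paper's strategy: you expand $\widetilde{S}_d(s)$ over the index set $U_d(k)\sqcup\{(n,0):n\in U_{d-1}(k)\}$, invoke \Cref{modest} to get a unique minimizer in each half (with minimal degrees $\delta_d$ and $k+\delta_{d-1}$ respectively), dispose of the cases where one half is empty, and record the correct merge identity $D_d(M)-D_{d-1}(M')=k-M_d$. The paper's own proof uses exactly the same merge map $(m_0,\dots,m_d)\mapsto(m_0,\dots,m_{d-2},m_{d-1}\oplus m_d)$ and the same reliance on \Cref{modest}; the difference is that the paper argues by a coordinate-by-coordinate exchange that the degree-minimizing tuple of $\widetilde{S}_d(s)$ must merge to the modest element of $U_{d-1}(k)$, and then recovers the minimizer uniquely by un-merging the last coordinate.

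The genuine gap is at what you yourself identify as the crux: the claim that $M'$, the merge of the modest element of $U_d(k)$, satisfies $D_{d-1}(M')=\delta_{d-1}$. Everything you need --- the strict inequality $\delta_d=\delta_{d-1}+k-M_d<k+\delta_{d-1}$, hence the non-coincidence of the two candidate minima --- hangs on this, and you do not prove it; you only describe how you \emph{would} prove it from Sheats' greedy construction, and you concede that the ``delicate point'' about splitting the lightest slot and relabelling weights ``requires the quantitative feasibility estimates in \cite{Sheats}'' without supplying them. Note also that $D_{d-1}(M')\ge\delta_{d-1}$ goes the wrong way for you: a priori the merge of the modest element of $U_d(k)$ could fail to be degree-minimizing in $U_{d-1}(k)$, in which case your bound on $\delta_d$ evaporates. (A weaker statement would actually suffice --- you only need \emph{some} $m\in U_d(k)$ with $D_{d-1}(\mathrm{merge}(m))-m_d<\delta_{d-1}$, e.g.\ by splitting the last coordinate of the modest element of $U_{d-1}(k)$ into two positive $q$-even carry-free parts --- but that splittability is itself not automatic and is not addressed.) As it stands the proposal is an outline of a correct strategy with its central combinatorial lemma left unproved.
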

\begin{proof}
We start with
\begin{align}
\nonumber
\widetilde{S}_d(s) &=\sum_{\substack{\forall i<d,\theta_i \in \mathbb{F}_q\\ \theta_d\in \mathbb{F}_q^*}} (t^d+\theta_{1} t^{d-1}+ \cdots +\theta_d)^{-s} \\\nonumber
&=\sum_{\substack{\forall i<d,\theta_i \in \mathbb{F}_q\\ \theta_d\in \mathbb{F}_q^*}}\sum_{\substack{m_0+\cdots +m_d = -s\\ m_i \geq 0}} \binom{-s}{m_0, \ldots, m_d} \theta_1^{m_1} \cdots \theta_d^{m_d} t^{dm_0+(d-1)m_1+\cdots +m_{d-1}}\\\nonumber
&=(-1)^d\sum_{\substack{m_0+\cdots +m_d = -s\\ m_0 \geq 0, (q-1) | m_i > 0, 1 \leq i < d\\ (q-1)|m_d\geq 0}} \binom{-s}{m_0, \ldots, m_d} t^{dm_0+(d-1)m_1+\cdots +m_{d-1}}\\
&=(-1)^d\sum_{\substack{\oplus_0^d m_i= -s\\ m_0 \geq 0, (q-1) | m_i > 0, 1 \leq i < d\\ (q-1)|m_d\geq 0}} \binom{-s}{m_0, \ldots, m_d} t^{dm_0+(d-1)m_1+\cdots +m_{d-1}} \label{SSd(n)}
\end{align}
where $\oplus_{i=0}^d m_i$ denote sum $\sum_{i=0}^d m_i$ with no carry over of digits base $p$. The third equality comes from exchanging two sum indices and the fact that $\sum_{\theta \in \mathbb{F}_q} \theta^k=-1$ if $k \neq 0, (q-1) | k$ and 0 otherwise. The last equality is the application of Lucas' theorem saying that the multinomial coefficient $\binom{-s}{m_1, \ldots, m_d}$ vanishes in $\mathbb{F}_q$ iff the sum $\sum m_i$ has carry over base $p$. Obviously, $\widetilde{S}_d(s)$ vanishes if sum (\ref{SSd(n)}) is empty. 

We claim that if $dm_0+\cdots+m_{d-1}$ attains the minimum only if $(m_0,\cdots,m_{d-1}+m_d)$ is the most modest element in $S_d(s)$.
Otherwise, we assume $\exists (m_0',\cdots,m_{d-1}')\neq (m_0,\cdots,m_{d-1}+m_d)$ is the most modest element in $U_d(-s)$. then if $m_0'< m_0$, we have $m_0'-m_0>0$ is $q-$even. Hence $(m_0,m_1',\cdots,m_{d-1}'+m_0'-m_0)$ is more modest, contradiction. So we always have $m_0'=m_0$, similar arguments guaranteed that $m_i=m_i'$ for all $i<d-1$, hence, we must have $m_{d-1}'=m_{d-1}+m_d$.

By the \Cref{modest}, we know the uniqueness of the most modest element in $S_{d-1}(s)$ and we let $0<m_{d-1}\leq m_{d-1}+m_d$ to be the maximum $q-$even value such that $m_{d}\geq 0$ is $q-$even and $m_{d-1}\oplus m_d$. Hence, we get that the term with minimum degree in $t$ is unique.
\end{proof}

For $d \geq0$ and $s$ negative, define $\nu_d(s) := v_t(\widetilde{S}_d(s))$, where $v_t$ is the $t-$adic valuation. The above theorem implies
\begin{Cor}
For any $q$ and fixed $s < 0$, we have
$$\nu_{\lfloor L_{-s} \rfloor+1}(s) > \nu_{\lfloor L_{-s} \rfloor}(s) > \cdots > \nu_1(s) \geq \nu_0(s).$$
\end{Cor}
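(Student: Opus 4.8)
The plan is to reduce the claim to a purely combinatorial comparison of minimal degrees, using \Cref{unique} to identify $\nu_d(s)$ with the exponent of the smallest power of $t$ occurring in the expansion (\ref{SSd(n)}). Writing the exponent of the summand indexed by $(m_0,\dots,m_d)$ as $\sum_{i=0}^{d}(d-i)m_i = (-s)d - \sum_{i=1}^{d} i\,m_i$, minimizing the degree is the same as maximizing the weighted sum $\sum_{i=1}^{d} i\,m_i$ over the admissible tuples, i.e.\ those with $\oplus_{0}^{d} m_i = -s$, $m_0\ge 0$, $(q-1)\mid m_i>0$ for $1\le i<d$, and $(q-1)\mid m_d\ge 0$. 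I would therefore set $W_d := \max \sum_{i=1}^{d} i\,m_i$, taken over all such tuples. Because \Cref{unique} guarantees that the minimal-degree term is unique, it cannot cancel, so $\nu_d(s) = (-s)d - W_d$; and by \Cref{vanishcondition2} (with $v=t$) this quantity is finite exactly for $0\le d\le \lfloor L_{-s}\rfloor+1$, which are precisely the indices appearing in the asserted chain.

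With this reformulation the inequality $\nu_d(s)>\nu_{d-1}(s)$ is equivalent to $W_d < W_{d-1} + (-s)$, and the key step is a ``merging'' inequality. Given a tuple $(m_0,\dots,m_d)$ attaining $W_d$, I would collapse its last two coordinates to $(m_0,\dots,m_{d-2},\,m_{d-1}+m_d)$; this is again admissible at level $d-1$, since the merged final entry is $q$-even and nonnegative, the no-carry condition is preserved, and positions $1,\dots,d-2$ remain positive $q$-even. A short computation gives that its weighted sum equals $\sum_{i=1}^{d} i\,m_i - m_d$, whence $W_d \le W_{d-1} + m_d$, where $m_d$ is the last coordinate of the chosen optimal level-$d$ tuple.

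To conclude the strict part ($d\ge 2$) I would note that, since positions $1,\dots,d-1$ must carry positive $q$-even mass, $m_d = -s - \sum_{i<d} m_i \le -s - m_1 \le -s-(q-1) < -s$; combined with the merging inequality this yields $W_d < W_{d-1}+(-s)$ and hence $\nu_d(s)>\nu_{d-1}(s)$. For the remaining relation $\nu_1(s)\ge \nu_0(s)$ I observe that $\widetilde S_d(s)$ is a polynomial in $t$, so $\nu_d(s)\ge 0$, while $\widetilde S_0(s)=1$ gives $\nu_0(s)=0$; here strictness can genuinely fail, precisely because at level one the last coordinate may equal $-s$ (when $s$ is $q$-even), which is exactly the boundary excluded in the $d\ge 2$ estimate. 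The main obstacle is the merging step: I must verify carefully that collapsing the last two coordinates always lands in the admissible index set for $\widetilde S_{d-1}(s)$, and that \Cref{unique} really forbids cancellation of the extremal term, so that $\nu_d(s)$ equals the combinatorial minimum $(-s)d-W_d$ rather than merely being bounded below by it.
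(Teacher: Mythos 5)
Your proposal is correct and follows essentially the same route as the paper's proof: both identify $\nu_d(s)$ with the degree of the unique minimal term via \Cref{unique}, collapse the last two coordinates of the extremal tuple into an admissible level-$(d-1)$ tuple, and observe that the resulting gap between the two degrees is $-s-m_d$, which is strictly positive for $d\geq 2$ because $m_1\geq q-1>0$, with equality possible only at $d=1$. Your restatement in terms of maximizing $\sum_{i} i\,m_i$ is just a cosmetic reparametrization of the same comparison.
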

\begin{proof}
Since $\nu_0(s)=v_t(1)=0$ for all $s$, the last inequality is obvious. Assume $1 < d \leq L_{-s}+1$ and let $\mathbf{M}=(M_0, \ldots, M_d)$ corresponding to the unique term in $\widetilde{S}_d(s)$, then \Cref{unique} implies $\nu_d(s)=dM_0+(d-1)M_1+\ldots+M_{d-1}$. Consider $\mathbf{N}=(M_0, \ldots, M_{d-2}, M_{d-1}+M_d)$, $\mathbf{N}\in U_{d-1}(-s)$ and thus $\nu_{d-1}(s) \leq v_t(S_{d-1}(s)) \leq (d-1)M_0+(d-2)M_1+\ldots+M_{d-2} \leq \nu_d(s)$, where the second inequality is equality iff $d=1$ and $M_d=-s$.
\end{proof}

With this corollary, we have the following result.
\begin{Th}
Given $K=\mathbb{F}_q(t)$, $v$ a degree $1$ prime, $\mathbf{s}=(s_1, \cdots, s_r)\in \mathbb{Z}$ with each $s_i \leq 0$, $\zeta_v(\mathbf{s})=0$ if and only if one of the following conditions holds
\begin{itemize}
    \item[(1)]  $r>1$, $\mathbf{s}$ is a trivial zero,
    \item[(2)] $r=1$, $s$ is $q-$even.
\end{itemize}
\end{Th}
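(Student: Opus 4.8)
The plan is to work $t$-adically on $K_v$ and to isolate, among the finitely many nonzero summands of $\zeta_v(\mathbf{s})$, a single one of strictly smallest degree in $t$. First I would reduce to $v=t$: a degree one prime has the form $v=t-\alpha$ with $\alpha\in\mathbb{F}_q$, and the substitution $t\mapsto t+\alpha$ is an $\mathbb{F}_q$-automorphism of $K$ carrying $v$ to $t$; it permutes $A_{d+}$ and the monics coprime to $v$, and it fixes all the quantities $L_{-s_i}$, the notion of $q$-even, and the trivial-zero condition, so it carries $\zeta_v(\mathbf{s})$ to $\zeta_t(\mathbf{s})$ and one vanishes iff the other does. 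This legitimizes applying \Cref{unique} and the corollary following it, both stated for $v=t$. With this in place the implication ``trivial zero $\Rightarrow\zeta_v(\mathbf{s})=0$'' for $r>1$ is already the content of the corollary following \Cref{vanishcondition2}: for $\deg v=1$ a trivial zero gives an index $i$ with $r-i>L_{-s_i}+1$, whence by \Cref{vanishcondition2} every factor $\widetilde{S}_{d_i}(s_i)$ with $d_i\geq r-i$ vanishes, so every summand of $\zeta_v(\mathbf{s})$ is zero.

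For $r=1$ I would proceed through an Euler-factor identity. Splitting the monics of degree $d$ into those coprime to $v$ and those divisible by $v$ gives $\widetilde{S}_d(s)=S_d(s)-v^{-s}S_{d-1}(s)$ when $\deg v=1$, and summing over $d\geq 0$ (a finite sum, since $S_d(s)$ and $\widetilde{S}_d(s)$ vanish for large $d$ by \Cref{vanishcondition} and \Cref{vanishcondition2}) and reindexing the second sum gives $\zeta_v(s)=(1-v^{-s})\zeta(s)$. For $s<0$ the factor $1-v^{-s}=1-v^{|s|}$ is a nonconstant polynomial, hence nonzero, so $\zeta_v(s)=0$ iff $\zeta(s)=0$, which by the depth one case of the classical theorem stated in the introduction holds iff $s$ is $q$-even. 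For $s=0$ the factor is $1-v^{0}=0$, forcing $\zeta_v(0)=0$, in agreement with $0$ being $q$-even.

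The core of the proof is the converse for $r>1$: assuming $\mathbf{s}$ is \emph{not} a trivial zero I must show $\zeta_v(\mathbf{s})\neq 0$. Set $\nu_d(s)=v_t(\widetilde{S}_d(s))$, so that the summand indexed by $d_1>\cdots>d_r\geq 0$ has valuation $\sum_{j=1}^r\nu_{d_j}(s_j)$, finite precisely when all factors are nonzero. Because $\mathbf{s}$ is not a trivial zero, $r-i\leq L_{-s_i}+1$ for every $i$, so by \Cref{vanishcondition2} the distinguished tuple $d_j=r-j$ has every factor nonzero. I claim it \emph{uniquely} minimizes the total valuation. Indeed $d_j\geq r-j$ for all $j$, and the corollary following \Cref{unique} gives $\nu_0(s)\leq\nu_1(s)<\nu_2(s)<\cdots$, so $\sum_j\nu_{d_j}(s_j)\geq\sum_j\nu_{r-j}(s_j)$ termwise; for any other valid tuple I would locate a coordinate forcing a strict inequality. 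Finally \Cref{unique} guarantees that each nonzero $\widetilde{S}_{r-j}(s_j)$ has a single lowest-degree monomial, so the product over the distinguished tuple contributes one monomial whose coefficient is a product of nonzero elements of $\mathbb{F}_q$; since no competing summand attains this degree, the monomial survives and $\zeta_v(\mathbf{s})\neq 0$.

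The step I expect to be the main obstacle is the uniqueness of the minimizing tuple when some $s_j$ is $q$-even or zero, for then $\nu_0(s_j)=\nu_1(s_j)$ and a naive valuation count admits ties at the bottom of the range. The remedy is the strict ordering $d_1>\cdots>d_r$: taking the largest index $j^\ast$ with $d_{j^\ast}>r-j^\ast$, the only borderline case is $r-j^\ast=0$, where $d_{j^\ast}\geq 1$ forces $d_{j^\ast-1}\geq 2$ and hence a strict jump $\nu_2>\nu_1$ one coordinate to the left; when $s_{j^\ast-1}=0$ this competing tuple is simply invalid, since $\widetilde{S}_d(0)=0$ for $d\geq 2$. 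A secondary point needing care is that the surviving leading term cannot cancel, which is exactly why \Cref{unique} is invoked: it supplies a unique lowest-degree monomial in each factor, so the product over the minimizing tuple has no internal cancellation.
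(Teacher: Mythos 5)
Your proposal is correct and follows essentially the same route as the paper: reduce to $v=t$, use the strict monotonicity of the valuations $\nu_d(s)$ from the corollary to show that the summand indexed by $(r-1,\dots,1,0)$ has strictly minimal $t$-adic valuation among all nonzero summands, and conclude by the ultrametric inequality; your handling of the borderline tuple with $d_r\geq 1$ matches the paper's observation that some $j$ then satisfies $d_j>r-j>0$. The only divergence is minor: for $r=1$ the paper simply cites Goss, whereas you rederive the statement from the Euler-factor identity $\zeta_v(s)=(1-v^{-s})\zeta(s)$.
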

\begin{proof}
Without loss of generality, we assume $v=t$.
The case when $r=1$ is done by Goss. So we only need to prove it when $r>1$.
It's equivalent to show that $\zeta_t(\mathbf{s}) \neq 0$ if $\mathbf{s}$ is not a trivial zero. In this case, the sum $\zeta_t(\mathbf{s})=\sum_{d_1 > \cdots > d_r \geq 0} \widetilde{S}_{d_1}(s_1) \cdots \widetilde{S}_{d_r}(s_r)$ is nonempty. In particular, $\widetilde{S}_{r-1}(s_1) \cdots \widetilde{S}_0(s_r) \neq 0$ and 
$$v_t(\widetilde{S}_{r-1}(s_1) \cdots \widetilde{S}_0(s_r))=\sum_{i=1}^r \nu_{r-i}(s_i).$$
For any other term $\widetilde{S}_{d_1}(s_1) \cdots \widetilde{S}_{d_r}(s_r)$ in the sum, $d_i \geq r-i$ for all $i$ and there exist some $j$ such that $d_j > r-j > 0$, thus 
$$v_t(\widetilde{S}_{d_1}(s_1) \cdots \widetilde{S}_{d_r}(s_r))=\sum_{i=1}^r \nu_{d_i}(s_i) > v_t(\widetilde{S}_{r-1}(s_1) \cdots \widetilde{S}_0(s_r)).$$
By strict triangle inequality, $v_t(\zeta_t(\mathbf{s}))=v_t(\widetilde{S}_{r-1}(s_1) \cdots \widetilde{S}_0(s_r))=\sum_{i=1}^r \nu_{r-i}(s_i)$. Applying above theorem, we get $\zeta_t(\mathbf{s})=0$ iff $\exists i$ such that $s_i<0$ and $r-i>L_{-s_i}+1$. Hence, we are done.
\end{proof}

\begin{Def}
Given $K=\mathbb{F}_q(t)$, $v$ a monic prime with degree $d$.
We define
$S_v:=\lim\limits_{\substack{\longleftarrow\\ n}}\mathbb{Z}/(q^d-1)p^n\mathbb{Z}$, and we say $s=c_{-1}+\sum_{i=0}^{\infty}c_i(q^d-1)p^i\in S_v$
 where $q^d-1<c_{-1}\leq 0, p<c_i\leq 0$ is $q-$even iff $q-1|c_{-1}$.
\end{Def}

Thakur has shown in \cite{T04} that all $v-$adic multiple zeta functions are continuous in $S_v^r$, and we have the main theorem. 
\begin{Th}
Given a rational function field $K=\mathbb{F}_q(t)$, a degree $1$ prime $v$, $\mathbf{s}=(s_1, \ldots, s_r) \in S_v^r$, $\zeta_v(\mathbf{s})=0$ if and only if one of the following conditions holds
\begin{itemize}
    \item[(1)] $r>1$, $\mathbf{s}$ is a trivial zero,
    \item[(2)] $r=1$, $s$ is $q-$even. 
\end{itemize}
\end{Th}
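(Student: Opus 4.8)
The plan is to bootstrap from the integer case, just established in the preceding theorem, to all of $S_v^r$ using Thakur's continuity together with the ultrametric rigidity of the $t$-adic valuation. As before we may assume $v=t$. The case $r=1$ is Goss's zero-distribution theorem for the single $v$-adic zeta function, whose characterization of the $q$-even vanishing locus extends from negative integers to $S_v$ by continuity, so we concentrate on $r>1$. The two facts used throughout are that the negative integers are dense in $S_v$ (hence $\mathbb{Z}_{<0}^r$ is dense in $S_v^r$), and that, by \cite{T04}, each finite power sum $\widetilde{S}_d(s)=\sum_{(a,t)=1,\,a\in A_{d+}}a^{-s}$ and the full series $\zeta_v(\mathbf{s})$ are continuous on $S_v$ and $S_v^r$ respectively.

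For the direction ``trivial $\Rightarrow$ zero'', note that a trivial zero forces some witness coordinate $s_i$ to satisfy $r-i>L_{-s_i}+1$, and this can happen only when $s_i$ is a genuine negative integer, since an element of $S_v$ with an infinite digit expansion has $L_{-s_i}=\infty$. For such $s_i$, \Cref{vanishcondition2} gives $\widetilde{S}_{d_i}(s_i)=0$ for every $d_i\geq r-i$, and since every summand of $\zeta_v(\mathbf{s})$ has $d_i\geq r-i$, the series vanishes term by term. This reproduces the corollary above verbatim and needs no limiting argument.

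The substance is the converse: if $\mathbf{s}\in S_v^r$ is \emph{not} a trivial zero then $\zeta_v(\mathbf{s})\neq 0$. I would run the same leading-term analysis as in the integer case: the distinguished summand $\widetilde{S}_{r-1}(s_1)\cdots\widetilde{S}_0(s_r)$ is nonzero, each factor being nonzero precisely because $r-i\leq L_{-s_i}+1$, and one shows its $t$-adic valuation $\sum_{i}\nu_{r-i}(s_i)$ is strictly smaller than that of every other admissible summand, whence the strict triangle inequality forces $v_t(\zeta_v(\mathbf{s}))=\sum_i\nu_{r-i}(s_i)<\infty$. The only ingredient not yet available over $S_v$ is the strict monotonicity $\nu_d(s)>\nu_{d-1}(s)$ of the valuation corollary, which so far is known only at negative integers. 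I would upgrade it as follows: the set $\{s\in S_v:\widetilde{S}_d(s)\neq 0\}$ is open by continuity, and on it $\nu_d=v_t\circ\widetilde{S}_d$ is locally constant because $v_t$ is locally constant on $K_v^{\times}$; hence $\nu_d-\nu_{d-1}$ is a locally constant, $\mathbb{Z}\cup\{\infty\}$-valued function on the open set where $\widetilde{S}_{d-1}\widetilde{S}_d\neq 0$. Since the negative integers are dense there and the strict inequality already holds on them by the corollary, a locally constant function that is positive on a dense subset of each clopen piece is positive throughout; this transports the whole chain of strict inequalities to every $s\in S_v$, after which the integer-case valuation bookkeeping applies unchanged.

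The main obstacle is exactly this transfer of \emph{strict} inequalities from the dense set of negative integers to all of $S_v$: continuity alone only shows that $\zeta_v=0$ is a closed condition and does not by itself respect the trivial/nontrivial dichotomy, since strict relations are generally destroyed under limits. The resolution is the observation that the $t$-adic valuation is locally constant on the nonzero locus, so the inequalities $\nu_d>\nu_{d-1}$ are themselves clopen conditions and therefore persist off the dense set. A secondary point to verify carefully is that, as in the integer proof, any admissible index tuple other than $(r-1,\ldots,0)$ has some $j<r$ with $d_j>r-j$, so at least one strict inequality is actually triggered; this combinatorial bookkeeping is identical to the previous theorem and carries over without change.
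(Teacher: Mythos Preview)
Your proof follows essentially the same route as the paper: reduce to $v=t$, cite Goss for depth one, and for $r>1$ extend the integer-case leading-term analysis to all of $S_v^r$ by density of $\mathbb{Z}_{<0}$ and continuity of the $\widetilde{S}_d$. Where the paper constructs explicit digit-truncations $\mathbf{s}^n\to\mathbf{s}$ and asserts $\nu_d(s_i^n)=\nu_d(s_i)$ for $n\gg 0$, you invoke local constancy of $v_t$ on $K_v^\times$; these are two phrasings of the same mechanism, and your packaging is arguably cleaner.

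One caution about a small circularity: local constancy of $\nu_d=v_t\circ\widetilde{S}_d$ is only guaranteed on the open set $\{\widetilde{S}_d\neq 0\}$, so it cannot by itself show that a non-integer limit point $s$ actually lies in that set. To close the loop you still need that $\nu_d(s^n)$ stays \emph{bounded} along some approximating sequence of negative integers---this is what forces $\widetilde{S}_d(s)\neq 0$, after which your local-constancy argument applies. The underlying reason is that the modest element of $U_d(-s^n)$, and hence $\nu_d(s^n)$, depends only on finitely many low-order $q$-digits of $-s^n$, which stabilize under truncation. The paper's proof leaves this same step equally implicit (its ``Hence $\nu_d(s_i^n)=\nu_d(s_i)$'' is asserted, not derived), so you are at parity with the source; just be aware that the clopen-condition reasoning you describe presupposes nonvanishing rather than proving it.
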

\begin{proof}
Without loss of generality, we may assume $v=t$.
If $r=1,s\in \mathbb{Z}$, Goss showed that $\zeta_t(s)=0$ iff $s$ is $q-$even. When $s\notin \mathbb{Z}$ is $q-$odd, by continuation, $\forall s'\in \mathbb{Z}_-$ close enough to $s$ in $S_t$, we have $\mu_1(s)=\mu_1(s')$. And, we have $\mu_1(s')>m_0(s')=0$ by above corollary. Hence, $v_t(\zeta_t(s'))=\mu_0(s')=0$, $\forall s'$ close enough to $s$, i.e., $\zeta_t(s)\neq 0$ when $s$ is not $q-$even in $S_t$.

If $r>1$, we have shown this result when each $s_i\in \mathbb{Z}_{\leq 0}$.
So if $s_i\in S_t-\mathbb{Z}_{\leq 0}$ and $r>1$, we define $\mathbf{s}^n:=(s_1^n,\cdots,s_r^n)$ such that $s_i-s_i^n=\sum_{j=n+1}^{\infty}c_j(q-1)p^j$ where $p< c_j\leq 0$. If $s_i\in \mathbb{Z}_{\leq 0}$, then $s_i^n=s_i$ when $n>>1$. If $s_i\not\in \mathbb{Z}_{\leq 0}$, then we have infinitely many non zero $c_j$'s. Hence, in both cases, we have $\forall d\geq 0, 1\leq i\leq r$, $\nu_t(s_i^n)=\nu_t(s_i)$ when $i>>0$.

Now, apply the argument in last theorem, For any given sequence $d_1>\cdots>d_r\geq 0$, there exists $N\in \mathbb{N}$, such that $\forall n\geq N$, we have
$$v_t(\widetilde{S}_{d_1}(s_1) \cdots \widetilde{S}_{d_r}(s_r))=\sum_{i=1}^r \nu_{d_i}(s_i^n) > v_t(\widetilde{S}_{r-1}(s_1^n) \cdots \widetilde{S}_0(s_r^n))=v_t(\widetilde{S}_{r-1}(s_1) \cdots \widetilde{S}_0(s_r)).$$
By strict triangle inequality, $v_t(\zeta_t(\mathbf{s}))=v_t(\widetilde{S}_{r-1}(s_1) \cdots \widetilde{S}_0(s_r))=\sum_{i=1}^r \nu_{r-i}(s_i^n)$ when $n\geq N$. Applying last theorem, we get $\zeta_t(\mathbf{s})= 0$ iff $\exists i$ such that $s_i\leq 0$ and $r-i>L_{s_i}+1$.
\end{proof}

When $v$ is any prime, we conjecture that the following result holds.
\begin{Conj}
Given $K=\mathbb{F}_q(t)$, $v$ any monic prime, $\mathbf{s}=(s_1, \ldots, s_r) \in S_v^r$, $\zeta_v(\mathbf{s})=0$ if and only if one of the following conditions holds
\begin{itemize}
    \item[(1)] $r>1$, $\mathbf{s}$ is a trivial zero,
    \item[(2)] $r=1$, $s$ is $q-$even. 
\end{itemize}
\end{Conj}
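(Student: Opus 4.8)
The plan is to transplant, to a prime $v$ of arbitrary degree $\delta=\deg(v)$, the three-step mechanism behind the degree-one theorem. First I would reduce to $\mathbf{s}\in\mathbb{Z}_{\le 0}^{r}$: granting the integer case, the passage to all of $S_v^r$ is obtained verbatim from Thakur's continuity and the approximation $\mathbf{s}^n\to\mathbf{s}$ used in the proof of the last theorem, since $\delta$ plays no role there. For integer $\mathbf{s}$ every $\widetilde{S}_{d}(s)\in\mathbb{F}_q[t]$ and, by \Cref{vanishcondition2}, vanishes for all large $d$, so $\zeta_v(\mathbf{s})$ is a finite $\mathbb{F}_q[t]$-combination and it suffices to exhibit one place $w$ of $K$ at which $v_w(\zeta_v(\mathbf{s}))<\infty$.

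The heart is to build, for a single well-chosen place $w$, a valuation $\nu_d(s):=v_w(\widetilde{S}_d(s))$ that is finite (witnessed by a unique minimal-valuation term) and strictly monotone across the two regimes of \Cref{vanishcondition2}. Since $\delta\ge 2$ forces $v$ to have no root in $\mathbb{F}_q$, every degree-one place $w=(t-\alpha)$, $\alpha\in\mathbb{F}_q$, is prime to $v$, so $v^{-s}$ is a $w$-adic unit and the identity $\widetilde{S}_d(s)=S_d(s)-v^{-s}S_{d-\delta}(s)$ (with $\widetilde{S}_d=S_d$ for $d<\delta$) is well suited to valuation comparison. The translation $t\mapsto t+\alpha$ permutes each $A_{d+}$, whence $S_d(s)(t+\alpha)=S_d(s)(t)$ and therefore $v_{t-\alpha}(S_d(s))=\mu_d(s)$, the modest valuation of \Cref{modest}, independently of $\alpha$. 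Consequently, when $\mu_d(s)\neq\mu_{d-\delta}(s)$ the minimal $w$-valuation term of $\widetilde{S}_d(s)$ comes unambiguously from the dominant summand and is unique by \Cref{modest}; when $\mu_d(s)=\mu_{d-\delta}(s)$ the leading $w$-adic coefficient equals $C_d-v(\alpha)^{-s}C_{d-\delta}$, with $C_\bullet$ the modest coefficients, so non-cancellation becomes a single condition on $\alpha$ that one hopes to satisfy by fixing $\alpha$ suitably.

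With such a $w$ fixed, strict monotonicity would be established regime by regime. For $d<\delta$ one has $\widetilde{S}_d=S_d$, and the strict increase of $\mu_d(s)$ follows from the modest-element comparison of \Cref{modest} exactly as in the corollary preceding the degree-one theorem. For $\delta\le d\le L_{-s}+\delta$ one must track how subtracting $v^{-s}S_{d-\delta}(s)$ perturbs the valuation and verify that $\nu_d(s)$ remains strictly increasing while jumping to $+\infty$ precisely on the ranges $d>L_{-s}+\delta$ and $\delta>d>\lfloor L_{-s}\rfloor$ of \Cref{vanishcondition2}. Granting this, the endgame is formally identical to the degree-one proof: the non-trivial-zero hypothesis of \Cref{trivialzero} guarantees an admissible multi-index $(d_1,\dots,d_r)$ that is the unique minimizer of $\sum_i\nu_{d_i}(s_i)$, and the strict triangle inequality yields $v_w(\zeta_v(\mathbf{s}))=\sum_i\nu_{d_i}(s_i)<\infty$, hence $\zeta_v(\mathbf{s})\neq 0$.

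The principal obstacle is exactly the phenomenon that is invisible when $\delta=1$. For $v=t$ the coprimality $(a,v)=1$ relaxes to the single free coordinate $\theta_d\in\mathbb{F}_q^{*}$, which is what let \Cref{unique} read off a unique minimal term from a modest-element description; for $\delta>1$ the condition $a\not\equiv 0\pmod v$ does not localize to one coordinate, so that description is lost and one must instead rule out cancellation between the modest parts of $S_d(s)$ and $v^{-s}S_{d-\delta}(s)$. Worse, the same $w$ must simultaneously witness non-cancellation and strict monotonicity for all the finitely many pairs $(d,s_i)$ entering a fixed $\zeta_v(\mathbf{s})$; although each coincidence $\mu_d(s)=\mu_{d-\delta}(s)$ is non-generic in $\alpha$, when $q$ is small there may be too few degree-one places $\alpha\in\mathbb{F}_q$ to avoid every bad condition at once, and passing to a higher-degree auxiliary place destroys the clean identity $v_w(S_d(s))=\mu_d(s)$ that the translation argument supplied. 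Securing a single place that controls the full two-block valuation pattern of \Cref{vanishcondition2}, rather than a coarser threshold, is where an idea beyond the degree-one toolkit appears to be required.
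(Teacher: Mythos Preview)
The statement you are addressing is labeled a \emph{Conjecture} in the paper; the paper offers no proof and explicitly leaves the general-degree case open after establishing only the degree-one theorem. There is therefore no paper proof to compare against, and what you have written is, as you acknowledge in your last paragraph, a strategy sketch rather than a proof.

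That said, your outline is a reasonable extrapolation of the degree-one mechanism, and the obstacles you isolate are genuine. Two points deserve sharper emphasis. First, even if you obtained strict monotonicity of $\nu_d(s)$ within each of the two blocks $[0,\lfloor L_{-s}\rfloor]$ and $[\delta,\lfloor L_{-s}\rfloor+\delta]$ from \Cref{vanishcondition2}, the endgame needs more: you must compare valuations \emph{across} the gap and show that among all admissible multi-indices $(d_1,\dots,d_r)$ the minimum of $\sum_i \nu_{d_i}(s_i)$ is attained uniquely. With two blocks per coordinate the index $(r-1,\dots,0)$ need not be admissible, and the two-clause form of \Cref{trivialzero} for $\delta>1$ is exactly the combinatorial shadow of this; your sentence ``the non-trivial-zero hypothesis \dots\ guarantees an admissible multi-index \dots\ that is the unique minimizer'' asserts the crux without argument. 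Second, your translation identity $S_d(s)(t+\alpha)=S_d(s)(t)$ is correct and gives $v_{t-\alpha}(S_d(s))=\mu_d(s)$, but it controls only $S_d$, not $\widetilde S_d$; the cancellation scenario $\mu_d(s)=\mu_{d-\delta}(s)$ that you flag is not shown to be avoidable uniformly over all the finitely many pairs $(d,s_i)$ by any of the at most $q$ degree-one places. Absent a new idea resolving these two issues, the proposal remains a heuristic, which is consistent with the paper's decision to state the result as a conjecture.
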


\section{Zeros over all primes}
\begin{Def}
Given a rational function field $K=\mathbb{F}_q(t)$, and a prime $v$, We define $M_v(K):=$ the set consisting of all $v-$adic MZVs $\zeta_v(\mathbf{s})$ at $S_v^r$ over $K$. And we define 
$$M(K):=(M_v(K))_{v \textit{ prime}}=\prod_{v \textit{ prime}}M_v(K),$$
and we say $\mathbf{s}=(s_i)_{i=1}^r\in S_v^r$ is a trivial zero if it's in the intersection of all trivial zeros in $M_v(K)$ for all primes $v$.
\end{Def}

The following proposition shows that to check a specific $\mathbf{s}$ is a trivial zero or not, we only need to check for at most $r$ many primes $v$.
\begin{Prop}\label{P}
Given $\mathbf{s}=(s_i)_{i=1}^r$, $(\zeta_v(\mathbf{s}))_{v}\in M(K)$ are trivial zeros iff $\mathbf{s}$ is a trivial zero for all primes with degree less than or equal to $r$.
\end{Prop}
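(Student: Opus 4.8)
The plan is to reduce the statement to a purely combinatorial observation about how the trivial-zero conditions of \Cref{trivialzero} depend on the prime $v$. First I would note that both conditions defining a trivial zero involve $v$ only through the single integer $deg(v)$. Consequently, for a fixed tuple $\mathbf{s}=(s_i)_{i=1}^r$, whether $\zeta_v(\mathbf{s})$ is a trivial zero depends on $v$ only through $d:=deg(v)$; write $T(d)$ for the predicate ``$\mathbf{s}$ satisfies condition (1) or (2) of \Cref{trivialzero} with $deg(v)=d$''. Since monic primes of every degree exist over $\mathbb{F}_q(t)$, the hypothesis ``$\mathbf{s}$ is a trivial zero for all primes of degree $\leq r$'' is exactly ``$T(d)$ holds for $1\leq d\leq r$'', while the conclusion ``$(\zeta_v(\mathbf{s}))_v$ are trivial zeros'' is exactly ``$T(d)$ holds for all $d\geq 1$''.

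The forward implication is immediate, since the conclusion restricts to the hypothesis. For the converse, the key step is to show that $T(d)$ stabilizes once $d\geq r$, so that its value there is already detected at $d=r$. I would argue as follows for $d\geq r$. Condition (1) can never hold: for every $i$ we have $r-i\leq r-1<r\leq d\leq L_{-s_i}+d$, so the inequality $r-i>L_{-s_i}+deg(v)$ fails. In condition (2), the requirement $deg(v)>r-i$ reads $d>r-i$, which holds automatically for every $i\geq 1$ because $r-i\leq r-1<r\leq d$. Hence for $d\geq r$ condition (2) collapses to the $d$-independent statement that there exist $1\leq j<i\leq r$ with $s_i,s_j<0$, $r-i>L_{-s_i}$ and $i-j>L_{-s_j}$; call this statement $(2')$. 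It follows that $T(d)\Leftrightarrow (2')$ for every $d\geq r$, and in particular $T(d)\Leftrightarrow T(r)$ for all $d\geq r$.

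With the stabilization in hand the converse is routine: assuming $T(d)$ for all $1\leq d\leq r$, the case $d=r$ gives $(2')$, whence $T(d)$ holds for every $d\geq r$; combined with the assumed range $1\leq d<r$ this yields $T(d)$ for all $d\geq 1$, which is the claim. I expect the only real work to lie in the degree bookkeeping of the middle paragraph, namely pinning down that the cutoff is exactly $d=r$, so that condition (1) is already vacuous and the clause $deg(v)>r-i$ already automatic at $d=r$ rather than at some larger threshold; for $d<r$ the predicate $T(d)$ genuinely varies with $d$, which is why the full range $1\leq d\leq r$ is what must be tested. No analytic input is needed: everything reduces to comparing the fixed quantities $r-i$, $L_{-s_i}$ and $i-j$ against the varying parameter $d$, so the argument is finite and elementary once $T(d)$ is recognized to depend on $v$ only through $deg(v)$.
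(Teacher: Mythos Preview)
Your proposal is correct and follows essentially the same approach as the paper: both observe that the trivial-zero conditions depend on $v$ only through $deg(v)$, and that for $deg(v)\geq r$ condition~(1) is vacuous while condition~(2) reduces to the $d$-independent statement~$(2')$, so the value at $deg(v)=r$ determines all larger degrees. Your write-up is simply more explicit and systematic than the paper's terse version, which asserts the same stabilization without spelling out why condition~(1) becomes impossible or why the clause $deg(v)>r-i$ becomes automatic.
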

\begin{proof}
$(\Rightarrow)$
This direction is trivial by the definition.

$(\Leftarrow)$
We only need to show that $\mathbf{s}$ is a trivial zero for all primes $v$ with degree greater than $r$. 
Since $\mathbf{s}$ is a trivial zero for $v$ with degree $r$, in this case, there exists $1 \leq i\leq r$ such that $r-i > L_{-s_i}$ and there exists $j<i$ such that $i-j>L_{-s_j}$. We can see that this condition is independent of the choice of primes $v$ with degree greater than $r$. Hence, we are done.
\end{proof}

Now we claim by giving an example that the set of trivial zeros in $M(K)$ is nonempty.
\begin{Ex}
Given a rational function field $K=\mathbb{F}_q(t)$, $r>1$, and $\mathbf{s}=(s_1,\cdots,s_r)$ where for $1\leq i\leq 3$, $s_i=-p^{n_i}$ for some $n_i\in \mathbb{Z}_{\geq 0}$.

\begin{itemize}
    \item[(1)] If $q>2,r\geq 3$, $\mathbf{s}$ is a trivial zero in $M(K)$.
    \item[(2)] If $q=2,r\geq 5$, $\mathbf{s}$ is a trivial zero in $M(K)$.
\end{itemize}
\end{Ex}
\begin{proof}
By \cref{P}, we only need to check for the primes with degree less than $r+1$.

If $q>2$, we have $L_{-s_i}<1$ for $i\leq 3$.
When $deg(v)=1$, let $i=1$, we have $r-i\geq 2>L_{-s_1}+1$, it's done.
When $r\geq deg(v)>1$, let $i=2,j=1$, we have $deg(v)>r-i=r-2\geq 1>L_{-s_2}$, and $i-j=2-1=1>L_{-s_1}$, it's done.

If $q=2$, we have $L_{-s_i}=1$ for $i\leq 3$.
When $deg(v)=1,2$, let $i=1$, we have $r-i\geq 4>L_{-s_i}+deg(v)$, it's done.
When $r\geq deg(v)>2$, and $i=3,j=1$, we have $deg(v)>r-i=r-3\geq 2>L_{-s_3}$, and $i-j=2>L_{-s_1}$, it's done.
\end{proof}

\section*{acknowledgement}
I am especially thankful to Professor Dinesh S. Thakur for giving lots of valuable suggestions and help reviewing the details of this paper.

\end{document}